\newtheorem{theorem}{Theorem}[section]
\newtheorem{lemma}[theorem]{Lemma}
\newtheorem{proposition}[theorem]{Proposition}
\newtheorem{corollary}[theorem]{Corollary}
\newtheorem{conjecture}[theorem]{Conjecture}
\theoremstyle{definition}
\newtheorem{definition}[theorem]{Definition}
\theoremstyle{remark}
\numberwithin{equation}{section}
\newcommand{\inv}{\ensuremath\mathrm{inv}}
\newcommand{\maj}{\ensuremath\mathrm{maj}}
\newcommand{\iDes}{\ensuremath\mathrm{iDes}}
\newlength\cellsize \setlength\cellsize{12\unitlength}
\newcommand\cellify[1]{\def\thearg{#1}\def\nothing{}%
\ifx\thearg\nothing\vrule width0pt height\cellsize depth0pt%
  \else\hbox to 0pt{\usebox2\hss}\fi%
  \vbox to 12\unitlength{\vss\hbox to 12\unitlength{\hss$#1$\hss}\vss}}
\newcommand\tableau[1]{\vtop{\let\\=\cr
\setlength\baselineskip{-12000pt}
\setlength\lineskiplimit{12000pt}
\setlength\lineskip{0pt}
\halign{&\cellify{##}\cr#1\crcr}}}
\begin{document}


\title[Schur expansion of Macdonald polynomials]{Toward the Schur expansion of Macdonald polynomials}  

\author[S. Assaf]{Sami Assaf}
\address{Department of Mathematics, University of Southern California, Los Angeles, CA 90089}
\email{shassaf@usc.edu}

\subjclass[2010]{Primary 05E05; Secondary 05A15, 05A19, 05A30, 33D52}



\keywords{Macdonald polynomials, dual equivalence, Schur positivity}

\begin{abstract}
  We give an explicit combinatorial formula for the Schur expansion of Macdonald polynomials indexed by partitions with second part at most two. This gives a uniform formula for both hook and two column partitions. The proof comes as a corollary to the result that generalized dual equivalence classes of permutations are unions of standard dual equivalence classes of permutations for certain cases, establishing an earlier conjecture of the author, and suggesting that this result can be generalized to arbitrary partitions.
\end{abstract}

\maketitle

%
\section{Introduction}
%
\label{sec:introduction}

The transformed Macdonald polynomials, $\widetilde{H}_{\mu}(X;q,t)$, a transformation of the polynomials introduced by Macdonald \cite{Mac88} in 1988, are the simultaneous generalization of Hall--Littlewood and Jack symmetric functions with two parameters, $q$ and $t$. The \emph{Kostka-Macdonald coefficients}, denoted $\widetilde{K}_{\lambda,\mu}(q,t)$, give the change of basis from Macdonald polynomials to Schur functions, namely,
\begin{displaymath}
  \widetilde{H}_{\mu}(X;q,t) = \sum_{\lambda} \widetilde{K}_{\lambda,\mu}(q,t) s_{\lambda}(X) .
\end{displaymath}
A priori, $\widetilde{K}_{\lambda,\mu}(q,t)$ is a rational function in $q$ and $t$ with rational coefficients.

The Macdonald Positivity Theorem \cite{Hai01}, first conjectured by Macdonald \cite{Mac88}, states that $\widetilde{K}_{\lambda,\mu} (q,t)$ is in fact a polynomial in $q$ and $t$ with nonnegative integer coefficients. 
Garsia and Haiman \cite{GH93} conjectured that the transformed Macdonald polynomials $\widetilde{H}_{\mu}(X;q,t)$ could be realized as the bi-graded characters of certain modules for the diagonal action of the symmetric group $S_n$ on two sets of variables. Once resolved, this conjecture gives a representation theoretic interpretation of Kostka-Macdonald coefficients as the graded multiplicity of an irreducible representation in the Garsia-Haiman module, and hence $\widetilde{K}_{\lambda,\mu} (q,t) \in \mathbb{N}[q,t]$. Following an idea outlined by Procesi, Haiman \cite{Hai01} proved this conjecture by analyzing the algebraic geometry of the isospectral Hilbert scheme of $n$ points in the plane, thereby establishing Macdonald Positivity. This proof, however, is purely geometric and does not offer a combinatorial interpretation for $\widetilde{K}_{\lambda,\mu} (q,t)$.

In 2004, Haglund \cite{Hag04} conjectured a combinatorial formula for the monomial expansion of $\widetilde{H}_{\mu}(X;q,t)$ that was subsequently prove by Haglund, Haiman and Loehr \cite{HHL05}. This formula establishes that $\widetilde{K}_{\lambda,\mu} (q,t) \in \mathbb{Z}[q,t]$ but comes short of proving non-negativity.

Combinatorial formulas for $\widetilde{K}_{\lambda,\mu}(q,t)$ have been found for certain special cases.  In 1995, Fishel \cite{Fis95} gave the first combinatorial interpretation for $\widetilde{K}_{\lambda,\mu}(q,t)$ when $\mu$ is a partition with $2$ columns, and there are now other formulas for two column Macdonald polynomials \cite{Zab99,LM03,Hag04,Ass08}. In all cases, finding broad extensions for these formulas has proven elusive. Haglund \cite{Hag04} conjectured a formula for three columns Macdonald polynomials, and this was later proved by Blasiak \cite{Bla16} who noted that his methods would not extend beyond this case. Both Roberts \cite{Rob14} and Loehr \cite{Loe17} noticed that Haglund's formula for two column Macdonald polynomials \cite{Hag04} extends to partitions with one additional box in the bottom row by observing that the proof using dual equivalence \cite{Ass08,Ass15} works for this case as well.

In this paper, we extend the combinatorial formula for two column Macdonald polynomials to partitions with second part at most $2$. This case simultaneously contains two column and hook partitions. The proof is purely combinatorial and combines the bijective proofs of the two column and single row Macdonald polynomials in \cite{Ass08} utilizing the structure of dual equivalence classes in \cite{Ass15}. In particular, it establishes \cite{Ass15}(Conjecture 5.6) for $\mu_2 \leq 2$.

\begin{figure}[ht]
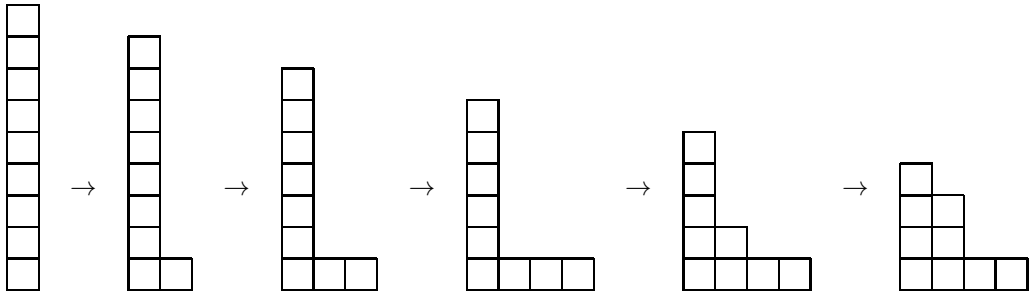

  \begin{displaymath}
    \tableau{ \ \\ \ \\ \ \\ \ \\ \ \\ \ \\ \ \\ \ \\ \ }
    \hspace{\cellsize} \raisebox{-5\cellsize}{$\rightarrow$} \hspace{\cellsize}
    \tableau{\\ \ \\ \ \\ \ \\ \ \\ \ \\ \ \\ \ \\ \ & \ }
    \hspace{\cellsize} \raisebox{-5\cellsize}{$\rightarrow$} \hspace{\cellsize}
    \tableau{\\ \\ \ \\ \ \\ \ \\ \ \\ \ \\ \ \\ \ & \ & \ }
    \hspace{\cellsize} \raisebox{-5\cellsize}{$\rightarrow$} \hspace{\cellsize}
    \tableau{\\ \\ \\ \ \\ \ \\ \ \\ \ \\ \ \\ \ & \ & \ & \ }
    \hspace{\cellsize} \raisebox{-5\cellsize}{$\rightarrow$} \hspace{\cellsize}
    \tableau{\\ \\ \\ \\ \ \\ \ \\ \ \\ \ & \ \\ \ & \ & \ & \ }
    \hspace{\cellsize} \raisebox{-5\cellsize}{$\rightarrow$} \hspace{\cellsize}
    \tableau{\\ \\ \\ \\ \\ \ \\ \ & \ \\ \ & \ \\ \ & \ & \ & \ }
  \end{displaymath}
  \caption{\label{fig:fold}Folding $(1^9)$ to $(4,2,2,1)$.}
\end{figure}

The main idea of this paper is to relate the standard dual equivalence classes for permutations, which coincide with the generalized dual equivalence classes for single column Macdonald polynomials, with the generalized dual equivalence classes for an arbitrary partition $\mu$. We do this incrementally, as indicated in Figure~\ref{fig:fold}, by folding the leg of the partition to form the rows, from bottom to top. In this paper, we succeed in defining bijections $\varphi_{\mu}$ for any partition $\mu$ with $\mu_2 \leq 2$ such that standard dual equivalence classes are combined into generalized $\mu$-dual equivalence classes. Thus we have a characterization of Kostka-Macdonald polynomials as 
\begin{displaymath}
  \widetilde{K}_{\lambda,\mu}(q,t) = \sum_{u \in \mathrm{SS}(\lambda)} q^{\inv_{\mu}(\varphi_{\mu}(u))} t^{\maj_{\mu}(\varphi_{\mu}(u))} ,
\end{displaymath}
where the sum is over certain permutations $u$ of type $\lambda$, and $\inv$ and $\maj$ are Haglund's statistics.

Our proofs are elementary and combinatorial, and this paper is largely self-contained. In Section~\ref{sec:mac}, we recall Haglund's permutation statistics from \cite{Hag04} and use them to define Macdonald polynomials combinatorially as done in \cite{HHL05}. In Section~\ref{sec:deg}, we recall the basic ideas of dual equivalence in \cite{Ass07,Ass15} and show how this machinery can be used to prove Macdonald positivity. In Section~\ref{sec:foata}, we recall Foata's bijection on permutations \cite{Foa68} and use it to prove our formula for hook partitions. In Section~\ref{sec:schur}, we recall a bijection from \cite{Ass08} and use it to prove our formula for partitions with second part at most $2$. 

%

%
\section{Macdonald polynomials}
%
\label{sec:mac}

We begin with terminology, notation and conventions. A \emph{permutation of $n$} is an ordering on the numbers $12\cdots n$. Though these are elements of the symmetric group $\mathfrak{S}_n$, we will not make use of the group structure. A \emph{word}, for our purposes, is a subword of a permutation, i.e., has no repeated letters. We regard $n$ as fixed throughout. Finally, let $X$ denote the infinite variable set $x_1,x_2,\ldots$.

Given a permutation $w$ and an index $i<n$, $i$ is an \emph{inverse descent of $w$} if $i+1$ lies to the left of $i$ in $w$. Denote the \emph{inverse descent set of $w$} by $\iDes(w)$. For example,
\[ \iDes(583691724) = \{2,4,7\} \]
For $w$ a permutation of $n$, $\iDes(w)$ is a subset of $[n-1] = \{1,2,\ldots,n-1\}$.

Gessel \cite{Ges84} defined an important basis of quasisymmetric functions indexed by subsets.

\begin{definition}[\cite{Ges84}]
  The \emph{fundamental quasisymmetric function} for $D \subseteq [n-1]$ is
  \begin{equation}
    F_{D}(X) = \sum_{\substack{i_1 \leq \cdots \leq i_n \\ j\in D \Rightarrow i_j < i_{j+1}}} x_{i_1} \cdots x_{i_n} .
  \end{equation}
  \label{def:gessel}
\end{definition}

For example, $F_{\{2,4,7\}}(X)$ contains the monomial $x_1^2x_3x_4x_5x_6^2x_8^2$ but not $x_1^2x_3x_4x_5x_6^3x_8$. 

Inverse descent sets allow us to associate a quasisymmetric function to each permutation. For example, the permutation $583691724$ will have the function $F_{\{2,4,7\}}(X)$ associated to it. An important application of this is the Frobenius character $H_n$  of the regular representation of $\mathfrak{S}_n$. As a function, $H_n$ may be written as the quasisymmetric generating function of permutations, 
\begin{equation}
  H_n(X) = \sum_{w \in \mathfrak{S}_n} F_{\iDes(w)}(X) .
\end{equation}

A \emph{partition of $n$} is a weakly decreasing sequence of positive integers that sum to $n$. Given a partition $\lambda$, a \emph{standard Young tableau of shape $\lambda$} is a permutation $w$ such that when the letters of $w$ fill the cells of the Young diagram of $\lambda$ from left to right, top to bottom, the rows increase left to right and the columns increase bottom to top. For example, see Figure~\ref{fig:SYT}.

\begin{figure}[ht]
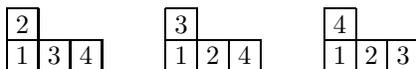

    \begin{displaymath}
      \tableau{2 \\ 1 & 3 & 4}
      \hspace{2\cellsize}
      \tableau{3 \\ 1 & 2 & 4}
      \hspace{2\cellsize}
      \tableau{4 \\ 1 & 2 & 3} 
    \end{displaymath}
    \caption{\label{fig:SYT}The standard Young tableaux of shape $(3,1)$.}
\end{figure}

Gessel \cite{Ges84} proved that summing over the reading words of all standard Young tableaux of a given shape precisely gives the Schur function indexed by that shape. We take this as our definition.

\begin{definition}[\cite{Ges84}]
  The \emph{Schur function} for $\lambda$ is
  \begin{equation}
    s_{\lambda}(X) = \sum_{T \in \mathrm{SYT}(\lambda)} F_{\iDes(w(T))}(X),
  \end{equation}
  where $\mathrm{SYT}(\lambda)$ is the set of standard Young tableaux of shape $\lambda$.
  \label{def:schur}
\end{definition}

Every character for $\mathfrak{S}_n$ has a unique decomposition as a sum of irreducible characters which coincide with Schur functions. The regular representation decomposes as
\begin{equation}
  H_n(X) = \sum_{\lambda} f_{\lambda} s_{\lambda}(X),
\end{equation}
where $f_{\lambda}$ is the number of standard Young tableaux of shape $\lambda$.

Macdonald \cite{Mac88} introduced polynomials that are a two-parameter analog of $H_n$. Haglund \cite{Hag04} defined new permutation statistics based on a partition shape that interpolate between the well-known major index and inversion number. Haglund, Haiman, and Loehr \cite{HHL05} proved that the generalized $\maj_{\mu}$ and $\inv_{\mu}$ statistics precisely give the Macdonald polynomials.

Given a permutation $w$ and a partition $\mu$, we fill the Young diagram of $\mu$ with $w$ from left to right, top to bottom. For example, see Figure~\ref{fig:mu}.

\begin{figure}[ht]
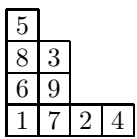

  \begin{displaymath}
    \tableau{5 \\ 8 & 3 \\ 6 & 9 \\ 1 & 7 & 2 & 4}
  \end{displaymath}
  \caption{\label{fig:mu}The Young diagram of $(4,2,2,1)$ filled with the permutation $583691724$.}
\end{figure}

\begin{definition}[\cite{Hag04}]
  Given a partition $\mu$, define the \emph{$\mu$-descent set of $w$}, denoted by $\mathrm{Des}_{\mu}(w)$, to be the set of cells of $\mu$ such that the $w$-entry is strictly greater than the $w$-entry immediately below in $\mu$. Define the \emph{$\mu$-major index of $w$}, denoted by $\maj_{\mu}(w)$, by
  \begin{equation}
    \maj_{\mu}(w) = \sum_{c\in\mathrm{Des}_{\mu}(w)} \mathrm{leg}(c)+1 ,
  \end{equation}
  where $\mathrm{leg}(c)$ is the number of cells strictly above $c$ in $\mu$.
  \label{def:maj}
\end{definition}

For example, from Figure~\ref{fig:mu}, we see that the $(4,2,2,1)$-descent set of $583691724$ consists of the cells containing $8,6,9$. Therefore the $(4,2,2,1)$-major index of $583691724$ is $2+3+2=7$. Notice that the $(1^n)$-major index is the usual major index statistic on permutations.

\begin{definition}[\cite{Hag04}]
  Given a partition $\mu$, define the \emph{$\mu$-inversion set of $w$}, denoted by $\mathrm{Inv}_{\mu}(w)$, to be the set of pairs $(w_i>w_j)$ with $i<j$ and either $w_i$ and $w_j$ are in the same row, or $w_j$ is one row lower and strictly left. Define the \emph{$\mu$-inversion number of $w$}, denoted by $\inv_{\mu}(w)$, by
  \begin{equation}
    \inv_{\mu}(w) = |\mathrm{Inv}_{\mu}(w)| - \sum_{c\in\mathrm{Des}_{\mu}(w)} \mathrm{arm}(c) ,
    \label{e:inv}
  \end{equation}
  where $\mathrm{arm}(c)$ is the number of cells strictly right of $c$ in $\mu$.
  \label{def:inv}
\end{definition}

For the example, we see that $583691724$ has $(4,2,2,1)$-inversion set $(8,3), (9,1), (7,2), (7,4)$, and the previously mentioned $(4,2,2,1)$-descent set of $8,6,9$. Therefore the $(4,2,2,1)$-inversion number is $4-1-1-0=2$. Notice that the $(n)$-inversion number is the usual inversion number.

As with Schur functions, we take a theorem as our definition for Macdonald polynomials. 

\begin{definition}[\cite{HHL05}]
  The \emph{Macdonald polynomial} for $\mu$ is
  \begin{equation}
    \widetilde{H}_{\mu}(X;q,t) = \sum_{w \in \mathfrak{S}_n} q^{\inv_{\mu}(w)} t^{\maj_{\mu}(w)} F_{\iDes(w)}(X) .
  \end{equation}
  \label{def:macdonald}
\end{definition}

To summarize the running example, the permutation $583691724$ will contribute $q^2 t^7 F_{\{2,4,7\}}(X)$ to the Macdonald polynomial $\widetilde{H}_{(4,2,2,1)}(X;q,t)$.



It remains an important open problem to prove, combinatorially, that the Schur coefficients of $\widetilde{H}_{\mu}(X;q,t)$ are nonnegative and to give an explicit combinatorial formula for the Schur expansion. We resolve this problem for the case when $\mu_2 \leq 2$. Precisely, we give a characterization of a set of \emph{super-standard} words together with a simple bijection $\varphi$ on permutations such that 
\begin{equation}
  \widetilde{H}_{\mu}(X;q,t) = \sum_{\lambda} \left( \sum_{u \in \mathrm{SS}(\lambda)} q^{\inv_{\mu}(\varphi(u))} t^{\maj_{\mu}(\varphi(u))} \right) s_{\lambda}(X),
\end{equation}
where $\mathrm{SS}(\lambda)$ is the set of super-standard words of type $\lambda$.

%
\section{Dual equivalence classes}
%
\label{sec:deg}

Dual equivalence involutions on permutations were studied in depth by Haiman \cite{Hai92}, though they also appear in earlier work of Edelman and Greene \cite{EG87}. We recall the basic definition here, along with a variation introduced by Assaf \cite{Ass07}.

\begin{definition}[\cite{Hai92}]
  Define the \emph{elementary dual equivalence involution} $d_i$, $1<i<n$, on permutations $w$ as follows. If $i$ lies between $i-1$ and $i+1$ in $w$, then $d_i(w)=w$. Otherwise, $d_i$ interchanges $i$ and whichever of $i\pm 1$ is further away from $i$.
  \label{def:switch}
\end{definition}

Since elementary dual equivalence involutions exchange a pair of consecutive values that are never adjacent, they clearly preserve the descent set of a permutation. In particular, they partition permutations of a given size and major index into equivalence classes. For example, Figure~\ref{fig:maj} shows the dual equivalence classes for permutations of $4$ with major index $2$.

\begin{figure}[ht]
  \begin{displaymath}
    \{ 2314 \stackrel{d_2}{\longleftrightarrow} 1324 \stackrel{d_3}{\longleftrightarrow} 1423 \}
    \hspace{2\cellsize}
    \{ 2413 \begin{array}{c}
      \stackrel{d_2}{\longleftrightarrow} \\[-1ex]
      \stackrel{\displaystyle\longleftrightarrow}{_{d_3}}
    \end{array} 3412 \} 
  \end{displaymath}
  \caption{\label{fig:maj}The dual equivalence classes of $w\in\mathfrak{S}_4$ with $\maj(w)=2$.}
\end{figure}

Define the \emph{de-standardization of $w$}, denoted by $\mathrm{dst}(w)$, to be the word obtained by changing $1,\ldots,i_1$ to $1$, $i_1+1,\ldots,i_1+i_2$ to $2$, and so on, where $\iDes(w) = \{i_1,i_2,\ldots\}$. For example,
\[ \mathrm{dst}(583691724) = 342341312. \]
The \emph{weight} of a de-standardization is the composition whose $i$th part is the number of occurrences of the letter $i$. For the example above, the weight of $\mathrm{dst}(583691724)$ is $(2,2,3,2)$.

A permutation $w$ is \emph{super-standard} if for every $k=n,\ldots,1$, the $\mathrm{dst}(w)_k\cdots \mathrm{dst}(w)_n$ has at least as many $i-1$s as $i$s for every $i$. That is, $w$ is super-standard if every suffix of $\mathrm{dst}(w)$ has partition weight. In this case, we say that $w$ has weight $\lambda$ whenever $\mathrm{dst}(w)$ has weight $\lambda$. For example,
\[ \mathrm{dst}(719852364) = 314321121, \]
which satisfies the suffix property. Therefore $719852364$ is super-standard of weight $(4,2,2,1)$.

\begin{theorem}
  Every permutation is dual equivalent to a unique super-standard permutation. Moreover, the quasisymmetric generating function of a dual equivalence class of a super-standard permutation is equal to the Schur function indexed by its weight.
  \label{thm:maj}
\end{theorem}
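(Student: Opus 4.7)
The plan is to reduce the theorem to Haiman's classification of dual equivalence classes \cite{Hai92} together with Gessel's Schur identity (Definition~\ref{def:schur}), via a counting argument. The first step is to establish that super-standard permutations of weight $\lambda$ biject naturally with standard Young tableaux of shape $\lambda$. Indeed, for any super-standard $w$ of weight $\lambda$, the set $\iDes(w)$ is forced to be $\{\lambda_1, \lambda_1 + \lambda_2, \ldots\}$, so the values within each de-standardization block appear in increasing order in $w$, meaning $w$ is completely determined by $\mathrm{dst}(w)$. The suffix-partition condition on $\mathrm{dst}(w)$ is precisely the statement that the reverse of $\mathrm{dst}(w)$ is a Yamanouchi word of weight $\lambda$, and such words classically biject with $\mathrm{SYT}(\lambda)$ by placing the $k$-th letter as the next box in the row it names. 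This gives $|\mathrm{SS}(\lambda)| = f_\lambda$, and the total $\sum_\lambda f_\lambda$ super-standards matches the number of dual equivalence classes by Haiman, who shows that classes of $\mathfrak{S}_n$ biject with SYT of size $n$ via the RSK recording tableau $Q$, with each class of shape $\lambda$ containing $f_\lambda$ permutations indexed by their insertion tableaux $P$.

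To upgrade the cardinality match into the theorem, I would prove the structural lemma that $w$ is super-standard of weight $\lambda$ if and only if $P(w) = S_\lambda$, the canonical ``row super-standard'' SYT whose $k$-th row from the bottom contains the consecutive integers $\lambda_1 + \cdots + \lambda_{k-1} + 1, \ldots, \lambda_1 + \cdots + \lambda_k$. Since every class of shape $\lambda$ contains a unique permutation with $P = S_\lambda$, this lemma immediately identifies the super-standard of weight $\lambda$ in each class and gives both existence and uniqueness. For the generating function assertion, the bijection $C \leftrightarrow \mathrm{SYT}(\lambda)$ via $w \mapsto P(w)$, combined with the classical identity $\iDes(w) = \iDes(w(P(w)))$ (descent positions of the reading word of the insertion tableau), yields
\begin{equation*}
  \sum_{w \in C} F_{\iDes(w)}(X) \;=\; \sum_{P \in \mathrm{SYT}(\lambda)} F_{\iDes(w(P))}(X) \;=\; s_\lambda(X),
\end{equation*}
the final equality being Gessel's expansion.

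The hardest step will be the structural lemma ``$w$ super-standard of weight $\lambda$ iff $P(w) = S_\lambda$'', which translates a local suffix condition on $\mathrm{dst}(w)$ into a global statement about RSK insertion. I expect to prove it by induction on $n$: removing the value $n$ from a super-standard $w$ of weight $\lambda$ yields a super-standard permutation of weight $\lambda - e_\ell$ on $\{1, \ldots, n-1\}$ (where $\ell$ is the number of parts of $\lambda$), which by induction has insertion tableau $S_{\lambda - e_\ell}$; tracking how the insertion of $n$ interacts with the bumping chain then recovers $S_\lambda$. The converse follows from the same inductive reduction together with the explicit effect of row-inserting the last-added value.
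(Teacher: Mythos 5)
Your proposal follows essentially the same route as the paper's proof: Haiman's RSK characterization of dual equivalence classes combined with Gessel's expansion of $s_\lambda$ over standard Young tableaux, with your ``structural lemma'' being exactly the step the paper dismisses as a basic property of Robinson--Schensted insertion. One small point in your favor: since Haiman's theorem makes the \emph{recording} tableau constant on each class, the unique super-standard element must indeed be singled out by its \emph{insertion} tableau equaling the row-superstandard tableau of shape $\lambda$, as you state (e.g.\ $P(1423)$ has rows $123$ and $4$), whereas the paper's sketch says ``recording tableau'' at that step.
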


\begin{proof}
  The first statement follows from basic properties of the Robinson-Schensted insertion algorithm along with the characterization that $u$ is dual equivalent to $v$ if and only if $u$ and $v$ have the same recording tableau proved in \cite{Hai92}. The super-standard permutations are precisely those whose recording tableau is filled from bottom to top, left to right with the identity. That each class is a single Schur function now follows from Definition~\ref{def:schur}, and the characterization of which Schur function from the characterization of dominant elements in \cite{Ass15}(Theorem 4.7).
\end{proof}

For example, the quasisymmetric generating function of the dual equivalence classes in Figure~\ref{fig:maj} are $s_{(3,1)}(X)$ and $s_{(2,2)}(X)$, respectively, with super-standard representatives $1423$ of type $(3,1)$ and $3412$ of type $(2,2)$. In particular, Theorem~\ref{thm:maj} establishes the Schur positivity of $H_n$ directly from the combinatorial definition and provides an explicit rule for computing the Schur expansion. 

\begin{corollary}
  Let $H_n$ denote the Frobenius character of the regular representation of $\mathfrak{S}_n$. Then
  \begin{equation}
    H_n(X) = \sum_{\lambda} \left(\# \mathrm{SS}(\lambda) \right) s_{\lambda}(X),
  \end{equation}
  where $\mathrm{SS}(\lambda)$ denotes the set of super-standard permutations of weight $\lambda$.
  \label{cor:maj}
\end{corollary}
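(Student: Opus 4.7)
The plan is to obtain the identity by partitioning $\mathfrak{S}_n$ into its dual equivalence classes and applying Theorem~\ref{thm:maj} term by term. Since each elementary involution $d_i$ preserves the inverse descent set, for any dual equivalence class $C \subseteq \mathfrak{S}_n$ the sum $\sum_{w \in C} F_{\iDes(w)}(X)$ is a well-defined quasisymmetric function attached to the class, and summing these over all classes recovers $H_n(X) = \sum_{w \in \mathfrak{S}_n} F_{\iDes(w)}(X)$.

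Next I would invoke the first assertion of Theorem~\ref{thm:maj}: every class contains exactly one super-standard permutation, so the collection of classes is in bijection with $\bigsqcup_{\lambda \vdash n} \mathrm{SS}(\lambda)$, where a class is labeled by $\lambda$ when its super-standard representative has weight $\lambda$. By the second assertion of Theorem~\ref{thm:maj}, the quasisymmetric generating function of the class labeled by $\lambda$ is exactly $s_{\lambda}(X)$. Substituting and grouping classes by weight gives
\begin{equation*}
  H_n(X) = \sum_{\lambda \vdash n} \sum_{u \in \mathrm{SS}(\lambda)} \sum_{w \sim u} F_{\iDes(w)}(X) = \sum_{\lambda \vdash n} \#\mathrm{SS}(\lambda) \cdot s_{\lambda}(X),
\end{equation*}
which is the claimed formula.

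There is essentially no obstacle: the entire content has been pushed into Theorem~\ref{thm:maj}. The only point requiring a sentence of care is confirming that $F_{\iDes(w)}$ is constant on a dual equivalence class (immediate from Definition~\ref{def:switch}, since $d_i$ swaps two values $i$ and $i \pm 1$ that are non-adjacent in $w$, so neither the relative order of any consecutive pair of values changes enough to alter the descent set). Everything else is bookkeeping.
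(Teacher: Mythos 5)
Your main line of argument is correct and is exactly the paper's (implicit) derivation: partition $\mathfrak{S}_n$ into dual equivalence classes, use the first assertion of Theorem~\ref{thm:maj} to index the classes by super-standard representatives grouped by weight, and use the second assertion to replace each class's quasisymmetric generating function by $s_{\lambda}$. However, your parenthetical ``point requiring care'' is false: the involution $d_i$ preserves the ordinary descent set $\mathrm{Des}(w)$ (equivalently, the recording tableau), \emph{not} the inverse descent set $\iDes(w)$. For instance $d_2(2314)=1324$, and $\iDes(2314)=\{1\}$ while $\iDes(1324)=\{2\}$. Indeed, if $F_{\iDes(w)}$ were constant on a class, the class's generating function would be a multiple of a single fundamental quasisymmetric function rather than the Schur function $s_{\lambda}=\sum_{T\in\mathrm{SYT}(\lambda)}F_{\iDes(w(T))}$, contradicting the very theorem you invoke; what actually happens is that $\iDes$ ranges over the descent sets of all standard tableaux of shape $\lambda$ as $w$ ranges over the class. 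Fortunately this claim is not needed anywhere: the sum $\sum_{w\in C}F_{\iDes(w)}(X)$ is well defined simply because $C$ is a set, and the rest of your bookkeeping goes through unchanged. Strike the parenthetical and the proof is fine.
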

  
Assaf \cite{Ass07,Ass15} introduced a variation of the dual equivalence involutions that arise naturally from considering Haglund's permutation statistics for Macdonald polynomials.

\begin{definition}[\cite{Ass07,Ass15}]
  Define the \emph{elementary twisted dual equivalence involution} $\widetilde{d}_i$, $1<i<n$, on permutations $w$ as follows. If $i$ lies between $i-1$ and $i+1$ in $w$, then $\widetilde{d}_i(w)=w$. Otherwise, $\widetilde{d}_i$ cyclically rotates $i-1,i,i+1$ so that $i$ lies on the other side of $i-1$ and $i+1$.
  \label{def:twisted}
\end{definition}

Notice that elementary twisted dual equivalence involutions preserve the number of inversions, though not the set of inversion pairs. In particular, they partition permutations of a given size and inversion number into equivalence classes. For example, Figure~\ref{fig:inv} shows the twisted dual equivalence classes for permutations of $4$ with $2$ inversions.

\begin{figure}[ht]
  \begin{displaymath}
    \{ 2314 \stackrel{\widetilde{d}_2}{\longleftrightarrow} 3124
    \stackrel{\widetilde{d}_3}{\longleftrightarrow} 2143 
    \stackrel{\widetilde{d}_2}{\longleftrightarrow} 1342 
    \stackrel{\widetilde{d}_3}{\longleftrightarrow} 1423 \} 
  \end{displaymath}
  \caption{\label{fig:inv}The twisted dual equivalence class of $w\in\mathfrak{S}_4$ with $\inv(w)=2$.}
\end{figure}

For permutations $u,v$ of length $n$, $u$ is twisted dual equivalent to $v$ if and only if $\inv(u) = \inv(v)$ and $u_1>u_n$ if and only if $v_1>v_n$. Using this observation and properties of Foata's bijection \cite{Foa68} described in the following section, we have the following.

\begin{theorem}[\cite{Ass08}]
  The quasisymmetric generating function of a twisted dual equivalence class of permutations is symmetric and Schur positive.
  \label{thm:inv}
\end{theorem}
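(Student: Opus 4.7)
The plan is to transfer the assertion to standard dual equivalence via Foata's bijection $\phi: \mathfrak{S}_n \to \mathfrak{S}_n$, to be recalled in Section~\ref{sec:foata}. The key properties of $\phi$ are that $\inv(w) = \maj(\phi(w))$ and $\iDes(\phi(w)) = \iDes(w)$; consequently $\phi$ preserves the quasisymmetric generating function of any subset of $\mathfrak{S}_n$ while converting the inversion statistic into the major index.

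By the characterization stated immediately before the theorem, a twisted dual equivalence class $C$ is precisely the set of all permutations having a prescribed value of $\inv$ together with a prescribed sign of $w_1 - w_n$. The crucial step is to show that $\phi(C)$ is closed under the elementary dual equivalence involutions $d_i$, so that $\phi(C) = C_1 \sqcup \cdots \sqcup C_r$ decomposes as a disjoint union of standard dual equivalence classes. The cleanest form of this, which I would aim to establish, is the intertwining relation $\phi \circ \widetilde{d}_i = d_i \circ \phi$ for every $i$, which would furnish an explicit bijection between twisted dual equivalence classes and standard dual equivalence classes.

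Granting the closure, Theorem~\ref{thm:maj} implies that each $C_j$ contributes a single Schur function $s_{\lambda_j}(X)$, so we obtain
\begin{displaymath}
  \sum_{w \in C} F_{\iDes(w)}(X) = \sum_{w \in \phi(C)} F_{\iDes(w)}(X) = \sum_{j=1}^{r} s_{\lambda_j}(X),
\end{displaymath}
which is both symmetric and Schur positive.

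The main obstacle is verifying the intertwining (or the weaker closure) property. This requires a careful case analysis of how Foata's iterative cyclic-rotation algorithm interacts with the relative positions of the triple $i-1, i, i+1$ that $\widetilde{d}_i$ cyclically permutes in $w$ versus the transposition of two of these values that $d_i$ performs in $\phi(w)$. Since Foata's bijection is defined by rotations keyed to comparisons with the letter being appended at each stage, one must verify that the corresponding rotations on either side of $\phi$ compose correctly when the triple $\{i-1,i,i+1\}$ is rearranged. This analysis is precisely the content of Section~\ref{sec:foata}.
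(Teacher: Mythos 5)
Your overall strategy---transport a twisted class through Foata's bijection, show the image is a union of standard classes, and invoke Theorem~\ref{thm:maj}---is exactly the route the paper takes (the paper defers the details to \cite{Ass08} and to Section~\ref{sec:foata}, where Theorem~\ref{thm:hooks}, iterated along the chain of hooks from $(1^n)$ to $(n)$, delivers precisely the closure statement you need). However, the ``cleanest form'' you propose to establish, the exact intertwining $\phi\circ\widetilde{d}_i = d_i\circ\phi$, is false, and provably cannot be repaired: if it held for all $i$, then $\phi$ would carry each $\widetilde{d}$-orbit bijectively onto a single $d$-orbit, so by Theorem~\ref{thm:maj} every twisted class would have a single Schur function as its generating function. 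But the twisted class in Figure~\ref{fig:inv} has generating function $s_{(3,1)}+s_{(2,2)}$, so its image must split into $r=2$ standard classes (those of Figure~\ref{fig:maj}), and no bijection commuting with the involutions can do that. Concretely, $2314 \stackrel{\widetilde{d}_2}{\longleftrightarrow} 3124$ inside the twisted class, yet their images under the inverse of $\varphi_{(4)}=\phi_1\phi_2\phi_3$ are $2314$ and $3412$, which lie in different standard classes; so the involution structure is not carried over edge by edge, only class into class.

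This matters because the weaker closure property, which is the one that is true and suffices, does not follow from a routine check of how the cyclic rotations interact with the triple $\{i-1,i,i+1\}$: the paper exhibits an explicit failure of commutation ($\phi_2(D_2^{(3,1,1)}(42513)) \neq D_2^{(4,1)}(\phi_2(42513))$, Figure~\ref{fig:noncommute}) and has to prove that the two results, while different permutations, are nonetheless in the same class. That step requires an auxiliary equivalence (Lemma~\ref{lem:inv}, itself proved by an induction with its own case analysis) and a transitivity argument, applied one hook-folding step at a time rather than to the full composite $\phi$ at once. So your plan is sound in outline and matches the paper's, but the specific mechanism you flag as the preferred one is unavailable, and the fallback you mention in passing is where essentially all of the work lives.
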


For example, the quasisymmetric generating function of the twisted dual equivalence class in Figure~\ref{fig:inv} is $s_{(3,1)}(X) + s_{(2,2)}(X)$, which is precisely the sum of the generating functions of the dual equivalence classes in Figure~\ref{fig:maj}. While the larger class in Figure~\ref{fig:inv} is not the union of the elements of the smaller classes in Figure~\ref{fig:maj}, we will show that it is after applying a suitable bijection.

Haglund's statistics interpolate between major index and inversion number. Analogously, the following involutions interpolate between dual equivalence and twisted dual equivalence. 

\begin{definition}[\cite{Ass07,Ass15}]
  Define involutions $D_i^{\mu}$, $1<i<n$, on permutations by
  \begin{equation}
    D_i^{\mu} (w) = \left\{ \begin{array}{rl}
      d_i(w) & \mbox{if $i$ is not a potential $\mu$-descent or $\mu$-inversion with $i\pm 1$}, \\
      \widetilde{d}_i(w) & \mbox{otherwise}. 
    \end{array} \right. 
  \end{equation}
  \label{def:Di}
\end{definition}

For example, both pairs of tableaux in Figure~\ref{fig:Dmu} are related by $D_7^{(4,2,2,1)}$. 

\begin{figure}[ht]
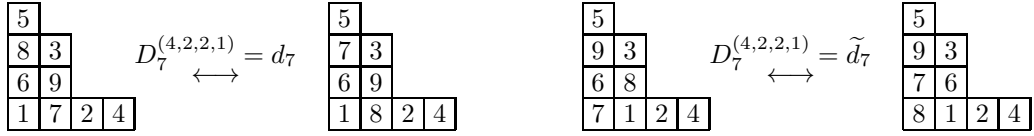

  \begin{displaymath}
    \tableau{5 \\ 8 & 3 \\ 6 & 9 \\ 1 & 7 & 2 & 4}
    \raisebox{-1.5\cellsize}{$\stackrel{\displaystyle D_7^{(4,2,2,1)} = d_7}{\longleftrightarrow}$} \hspace{\cellsize}
    \tableau{5 \\ 7 & 3 \\ 6 & 9 \\ 1 & 8 & 2 & 4}
    \hspace{4\cellsize}
    \tableau{5 \\ 9 & 3 \\ 6 & 8 \\ 7 & 1 & 2 & 4}
    \raisebox{-1.5\cellsize}{$\stackrel{\displaystyle D_7^{(4,2,2,1)} = \widetilde{d}_7}{\longleftrightarrow}$} \hspace{\cellsize}
    \tableau{5 \\ 9 & 3 \\ 7 & 6 \\ 8 & 1 & 2 & 4}
  \end{displaymath}
  \caption{\label{fig:Dmu}Two $(4,2,2,1)$-dual equivalence moves for $\{6,7,8\}$.}
\end{figure}

Haglund's formula relates Macdonald polynomials to LLT polynomials \cite{LLT97}, and so it follows from \cite{Ass15}(Proposition 5.2) that these involutions preserve Haglund's statistics. In particular, these involutions partition permutations of a given $\inv_{\mu}$ and $\maj_{\mu}$ statistic into equivalence classes. The main motivation for these involutions is the following Conjecture, which is a reformulation of \cite{Ass15}(Conjecture 5.6) for Macdonald polynomials.

\begin{conjecture}[\cite{Ass15}]
  For $\mu$ a partition, the quasisymmetric generating function of each generalized dual equivalence class under $D_i^{\mu}$ is symmetric and Schur positive.
  \label{conj:class}
\end{conjecture}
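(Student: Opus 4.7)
The plan is to construct, for each partition $\mu$, a bijection $\varphi_\mu$ on permutations so that each generalized $\mu$-dual equivalence class is the image under $\varphi_\mu$ of a union of standard dual equivalence classes. Once such a $\varphi_\mu$ is in hand the result is immediate: by Theorem~\ref{thm:maj} each standard class contributes a single Schur function $s_\lambda$ to the quasisymmetric generating function, so each $\mu$-class contributes a nonnegative integer combination of Schur functions, yielding both symmetry and Schur positivity. The base case $\mu = (1^n)$ is trivial because no cell has both a cell below and a cell to the right, forcing $D_i^{(1^n)} = d_i$; so $\varphi_{(1^n)}$ may be taken to be the identity and the claim reduces to Theorem~\ref{thm:maj}.

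I would proceed by induction along the chain of partitions in Figure~\ref{fig:fold}, folding one box at a time from the leg into the rows. At each folding step from $\nu$ to $\mu$, I would define a local bijection $\psi_{\nu\to\mu}$ that acts nontrivially only on the permutation entries whose $\mu$-descent or $\mu$-inversion status differs from their $\nu$-versions, and then set $\varphi_\mu = \psi_{\nu\to\mu} \circ \varphi_\nu$. The all-at-once single-row base case, going from $(1^n)$ to $(n)$, is supplied by Foata's bijection, which is the mechanism behind Theorem~\ref{thm:inv}: twisted classes correspond to unions of standard classes sharing the same inversion number and the same sign of $w_1 - w_n$.

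The key verification at each folding step is that $\varphi_\mu$ intertwines the generalized involutions with standard ones, in the sense that for each triple $\{i-1, i, i+1\}$ the action of $D_i^\mu$ on $\varphi_\mu(u)$ corresponds either to $d_i$ on $u$ or to a twisted move whose image nonetheless lies within the same union of standard classes. This reduces to a case analysis by the relative positions of $i-1$, $i$, $i+1$ within the affected cells of $\mu$. For $\mu_2 \le 2$, only finitely many configurations arise, and each can be handled by combining Foata's bijection (for the row segments that have just been folded) with the two-column bijection from \cite{Ass08} (for the second-column configurations).

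The main obstacle will be managing this intertwining verification as the folding progresses. Each additional row introduces new adjacencies between cells, hence new ways for an $i$-triple to straddle rows, and each new configuration may force an adjustment of $\psi_{\nu\to\mu}$. For $\mu_2 \le 2$ one can catalog these configurations by hand and dispatch each using the existing bijections; the difficulty in extending beyond $\mu_2 \le 2$ is precisely that genuinely new configurations appear which Foata's bijection and the bijection of \cite{Ass08} do not address, so the full conjecture would require additional bijective input and remains open outside the range targeted in this paper.
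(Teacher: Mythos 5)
The statement you are asked about is a \emph{conjecture} in the paper (a reformulation of \cite{Ass15}, Conjecture 5.6); the paper does not prove it for arbitrary $\mu$ and offers no proof to compare against in full generality. What the paper does prove is the special case $\mu_2 \le 2$, via Theorem~\ref{thm:hooks} (hooks, using Foata's $\gamma_x$) and Theorem~\ref{thm:tower} (second part at most $2$, using the $\beta_x$ bijection of \cite{Ass08} composed with $\gamma_x$). Your sketch is essentially that strategy: fold the leg one step at a time, intertwine $D_i^\mu$ with $D_i^\nu$ through a local bijection, and reduce to Theorem~\ref{thm:maj} at the column. You also correctly concede at the end that new configurations beyond $\mu_2 \le 2$ are not handled by these two bijections, so your proposal establishes at most what the paper establishes and leaves the statement as written open --- which is the honest state of affairs.

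Two points of precision worth flagging. First, the heart of the paper's argument is not the commuting case (your ``intertwining verification,'' which is Lemma~\ref{lem:hooks} and Lemma~\ref{lem:diagram}) but the non-commuting one: when $u_k \in \{i-1,i,i+1\}$ and the other two letters sit in the arm, $\phi_k(D_i^\mu(u))$ and $D_i^\nu(\phi_k(u))$ genuinely differ, and one must show they are nonetheless joined by a \emph{chain} of elementary $\nu$-equivalences. That is Lemma~\ref{lem:inv}, proved by an induction on the letter $x$ with twisted Knuth moves; your sketch does not identify this step, and without it the induction along the folding chain does not close. Second, your phrasing that ``twisted classes correspond to unions of standard classes'' is slightly off: the paper is explicit that the twisted class in Figure~\ref{fig:inv} is \emph{not} a union of the standard classes in Figure~\ref{fig:maj} as sets of permutations; it only becomes one after transporting by Foata's bijection, which is exactly why the bijections $\varphi_\mu$ are needed at all.
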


Since generalized dual equivalence classes under $D_i^{\mu}$ have constant $\inv_{\mu}$ and $\maj_{\mu}$ statistics, Conjecture~\ref{conj:class} is enough to establish Macdonald Positivity. In \cite{Ass15}, the conjecture is shown to hold for $\mu$ a two column partition and for $\mu$ a single row. In this paper, we merge these two cases to establish Conjecture~\ref{conj:class} for partitions $\mu$ with $\mu_2 \leq 2$. The proof utilizes explicit bijections on words that merge equivalence classes as the shape $\mu$ is transformed from a single column.

%
\section{Foata's bijection and hooks}
%
\label{sec:foata}

Foata \cite{Foa68} constructed a bijection on words with the property that the major index of a word equals the inversion number of its image, thereby providing a bijective proof of the equi-distribution of these two statistics. Moreover, his bijection preserves the inverse descent set, and so it proves that the quasisymmetric generating function of permutations with major index $k$ is equal to the quasisymmetric generating function of permutations with $k$ inversions. 

Foata's bijection makes recursive use of a family of bijections $\gamma_x$ indexed by a letter $x$. Given a word $w$ and a letter $x$ not in $w$, define a partitioning $\Gamma_x(w)$ of $w$ by: if $w_1<x$, then break before each index $i$ such that $w_i < x$; otherwise break before each index $i$ such that $w_i>x$. For example,
\[ \Gamma_5(83691724) = \ \mid 83 \mid 6 \mid 91 \mid 724 . \]
The bijection $\gamma_x$ is defined by cycling the first letter of each block of $\Gamma_x$ to the end of the block. Continuing with the example,
\[ \gamma_5(83691724) = 38619247 . \]

\begin{proposition}
  For $u,v$ words and $x$ a letter, we have $\iDes(uxv) = \iDes(ux\gamma_x(v))$.
  \label{prop:gamma-iDes}
\end{proposition}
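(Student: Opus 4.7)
The plan is to observe that $\iDes(w)$ is determined entirely by the relative order of each pair of consecutive values $\{i,i+1\}$ in $w$, so it suffices to show that for every $i$, the letters $i$ and $i+1$ occur in the same relative order in $uxv$ as in $ux\gamma_x(v)$. Since $u$ and the distinguished letter $x$ occupy identical positions in both words, any pair $\{i,i+1\}$ in which at least one value lies in $u$ or equals $x$ automatically has the same relative order in both words. This reduces the problem to pairs with both $i$ and $i+1$ lying in $v$.

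The next reduction uses that $x$ does not appear in $v$, so any pair of consecutive integers occurring in $v$ must lie entirely on one side of $x$ — either both strictly less than $x$ or both strictly greater than $x$. Thus it suffices to prove the following lemma: $\gamma_x$ preserves the relative order of any two letters of $v$ lying on the same side of $x$. Granting this lemma and applying it to $\{i,i+1\}$ completes the proof.

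To prove the lemma, I would unpack $\Gamma_x(v)$ directly. In the case $v_1<x$, each block begins with a letter less than $x$ and continues with letters all greater than $x$ (since $x\notin v$ and no intermediate letter less than $x$ may occur without starting a new block); the dual description holds when $v_1>x$. Cycling the first letter of a block to its end therefore slides exactly one letter past the other letters of that block, which sit on the opposite side of $x$. Hence any two letters of $v$ on the same side of $x$ fall into one of two subcases: either both are first letters of their (necessarily distinct) blocks — and the block order is untouched by $\gamma_x$ — or both are non-first letters, in which case they keep their relative positions both within a single block and across distinct blocks. Either way their relative order is preserved, proving the lemma.

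The main obstacle is just the careful bookkeeping of block structure under the two cases $v_1<x$ and $v_1>x$, together with tracking first-versus-non-first positions within a block; beyond that, the argument is a direct inspection of how $\gamma_x$ rearranges letters and requires no deeper machinery.
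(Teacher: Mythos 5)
Your proof is correct and follows essentially the same route as the paper's: reduce $\iDes$ to the relative order of consecutive values $i,i+1$, note that such a pair lies on one side of $x$, and check directly from the block structure of $\Gamma_x$ that the cyclic shift within each block cannot reorder two letters on the same side of $x$. Your first-letter/non-first-letter dichotomy is just a slightly more explicit unpacking of the paper's same-block/different-block case analysis.
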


\begin{proof}
  We claim that the relative order of consecutive letters is the same in $uxv$ as in $ux\gamma_x(v)$. If they are not in the same block of $\Gamma_x(v)$, either by being in different blocks or by at least one being in $ux$, then since the relative order of $ux$ and each of the blocks is preserved, their relative order remains the same in $ux\gamma_x(v)$. If they are in the same block of $\Gamma_x(v)$, then they are both larger or both smaller than $x$, and so their relative order remains unchanged by $\gamma_x$. 
\end{proof}

Define a family of bijections $\phi_k$, for $k\in [n]$, on permutations by
\begin{equation}
  \phi_k(w) = w_1\cdots w_k \gamma_{w_k}(w_{k+1} \cdots w_n) .
\end{equation}
For example, $\phi_4(583691724) = 5836 \gamma_6(91724) = 583619247$. We use the map $\phi_k$ to relate $(n-k,1^k)$-dual equivalence classes with $(n-k+1,1^{k-1})$-dual equivalence classes as illustrated in Figure~\ref{fig:hook}. 

\begin{figure}[ht]
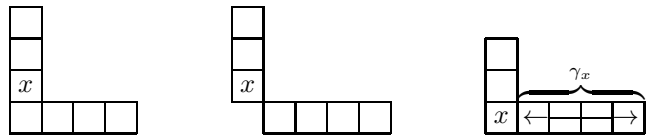

  \begin{displaymath}
    \tableau{ \ \\ \ \\ x \\ \ & \ & \ & \ } \hspace{3\cellsize}
    \tableau{ \ \\ \ \\ x \\ & \ & \ & \ & \ } \hspace{3\cellsize}
    \tableau{ \\ \ \\ \ \\ x & \hfill\raisebox{-.4\cellsize}{$\leftarrow$} & \line(1,0){12} & \line(1,0){12} & \raisebox{-.4\cellsize}{$\rightarrow$}\hfill } \hspace{-4\cellsize} \raisebox{-2\cellsize}{$\overbrace{\hspace{4\cellsize}}^{\gamma_x}$}
  \end{displaymath}
  \caption{\label{fig:hook}Sliding a hook to lengthen the arm.}
\end{figure}

To begin to relate dual equivalence classes with $\mu$-equivalence classes, we consider first the cases when the bijection $\phi_k$ commutes with the generalized dual equivalence involutions.

\begin{lemma}
  Let $\mu = (n-k,1^k)$ for some $k>0$, and let $\nu = (n-k+1,1^{k-1})$. If $D_i^{\mu}(u) = u$, $u_k \not\in\{i-1,i,i+1\}$, or $u_j \in\{i-1,i,i+1\}$ for some $j<k$, then $\phi_k(D_i^{\mu}(u)) = D_i^{\nu}(\phi_k(u))$.
  \label{lem:hooks}
\end{lemma}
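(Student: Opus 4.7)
The plan is to verify the commutation $\phi_k\circ D_i^{\mu} = D_i^{\nu}\circ\phi_k$ case by case according to the hypotheses, exploiting two structural facts about $\gamma_x$ that I would distill from the proof of Proposition~\ref{prop:gamma-iDes}: (i) $\gamma_x$ preserves the relative order of any pair of letters on the same side of $x$, so in particular the relative order of two consecutive integers different from $x$; and (ii) the block decomposition $\Gamma_x(v)$ is determined solely by the sign pattern (``$<x$'' versus ``$>x$'') of the entries of $v$, so it is invariant under any rearrangement preserving that pattern.

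For the first hypothesis, $D_i^{\mu}(u)=u$ forces $i$ to lie between $i-1$ and $i+1$ in $u$. Property (i), applied to the prefix-preserving map $\phi_k$, shows the same betweenness persists in $\phi_k(u)$, so $D_i^{\nu}(\phi_k(u))=\phi_k(u)=\phi_k(D_i^{\mu}(u))$. For the second hypothesis, $u_k\notin\{i-1,i,i+1\}$, so $D_i^{\mu}$ fixes $u_k$ and both $\phi_k(u)$ and $\phi_k(D_i^{\mu}(u))$ use the common pivot $u_k$. Since $u_k$ is outside the triple, the three letters $i-1,i,i+1$ all lie on the same side of $u_k$; property (i) preserves their cyclic configuration under $\gamma_{u_k}$, and property (ii) ensures that $\gamma_{u_k}$ commutes with both $d_i$ and $\widetilde{d}_i$ acting on the suffix. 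Because the three letters then sit at positions $\neq k$, and such positions have identical cell types (column for $j<k$, bottom row for $j>k$) in $\mu$ and $\nu$, the shape-dependent rule in the definition of $D_i^{\bullet}$ picks the same elementary involution for $\mu$ and for $\nu$, and the commutation follows.

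The third hypothesis is the delicate one, which I expect to be the main obstacle. If $u_k\notin\{i-1,i,i+1\}$ the second case applies directly, so the new situation is $u_k\in\{i-1,i,i+1\}$. Here $D_i^{\mu}$ may alter $u_k$ to a different value in $\{i-1,i,i+1\}$, in which case $\phi_k(D_i^{\mu}(u))$ uses a pivot for $\gamma$ different from that of $\phi_k(u)$. The saving observation will be that the two pivots both lie in $\{i-1,i,i+1\}$ and therefore differ by $1$ or $2$; when they differ by $2$, the unique integer strictly between them is the remaining element of the triple, and the column anchor $u_j\in\{i-1,i,i+1\}$ with $j<k$ supplied by the hypothesis forces that remaining element to sit in the prefix, so by property (ii) the two block decompositions $\Gamma$ agree on the suffix and the two $\gamma$'s produce identical rearrangements there. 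Once the pivot mismatch is resolved, the cell-type argument from the previous case must be adapted to account for the fact that position $k$ itself changes row-type between $\mu$ and $\nu$; establishing the desired compatibility between the $\mu$- and $\nu$-rules for $D_i$ in this sub-case is where the bulk of the work will lie, and will require a careful sub-case bookkeeping on where each of $i-1,i,i+1$ sits and on how the shape-dependent selection of $d_i$ versus $\widetilde{d}_i$ responds to each possible swap.
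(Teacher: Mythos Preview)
Your treatment of the first two hypotheses is essentially the paper's argument: the fixed-point case reduces to $\iDes$-preservation (your property~(i) is exactly what underlies Proposition~\ref{prop:gamma-iDes}), and when $u_k\notin\{i-1,i,i+1\}$ the triple lies on one side of the pivot, so the block structure is unaffected and the same elementary involution is selected for both $\mu$ and $\nu$.

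The gap is in the third hypothesis. The paper's key observation---which you do not make---is that once some $u_j\in\{i-1,i,i+1\}$ sits at a position $j<k$, that letter lies strictly in the leg of both $\mu$ and $\nu$, and this forces $D_i^{\mu}=d_i$ and $D_i^{\nu}=d_i$. There is then no shape-dependent selection to reconcile at all. Since $d_i$ swaps two \emph{consecutive} values, the pivot either stays fixed or shifts by exactly~$1$; in the latter case no integer lies strictly between the old and new pivot, so every arm letter (including the unmoved ``middle'' member of the triple) compares identically with both, and $\Gamma_{u_k}$ and $\Gamma_{d_i(u)_k}$ give the same block structure. The commutation $\phi_k\circ d_i=d_i\circ\phi_k$ follows directly.

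By not using this reduction you are led to consider pivots differing by~$2$, a case that never occurs here because it would require $D_i^{\mu}=\widetilde{d}_i$. Your assertion for that phantom case is also incorrect: the hypothesis only gives $u_j\in\{i-1,i,i+1\}$, not $u_j=i$, so the ``remaining element'' $i$ need not sit in the prefix. And the ``careful sub-case bookkeeping'' you anticipate for matching the $\mu$- and $\nu$-rules is unnecessary once you know both rules select $d_i$.
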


\begin{proof}
  By definition, $D_i^{\mu}(w) = w$ if and only if both or neither of $i-1,i$ are in $\iDes(w)$. By Proposition~\ref{prop:gamma-iDes}, $\phi_k$ preserves $\iDes$, and so $D_i^{\mu}(u) = u$ if and only if $D_i^{\nu}(\phi_k(u)) = u$. 

  If $u_k\not\in\{i-1,i,i+1\}$, then $\gamma_{u_k}$ does not differentiate between $i-1,i,i+1$, and so their relative order is the same in $u$ and in $\phi_k(u)$. Moreover, since the number of $i-1,i,i+1$ in the leg is the same for $u$ and in $\phi_k(u)$, $D_i^{\mu}$ and $D_i^{\nu}$ act by the same involution, so $\phi_k(D_i^{\mu}(u)) = D_i^{\nu}(\phi_k(u))$.
  
  Finally, if $u_j \in\{i-1,i,i+1\}$ for some $j<k$, then $D_i^{\mu}(u) = d_i(u)$, and since $\phi_k$ leaves the leg unchanged, we also have $D_i^{\mu}(\phi_k(u)) = d_i(\phi_k(u))$. Since $d_i$ exchanges consecutive values, the middle letter of $i-1,i,i+1$ occurring in $u$ compares the same with both values being exchanged. Therefore $\Gamma_{u_k}$ and $\Gamma_{d_i(u)_k}$ partition the arm in the same way, and so $\phi_k(d_i(u)) = d_i(\phi_k(u))$ as desired.
\end{proof}

\begin{figure}[ht]
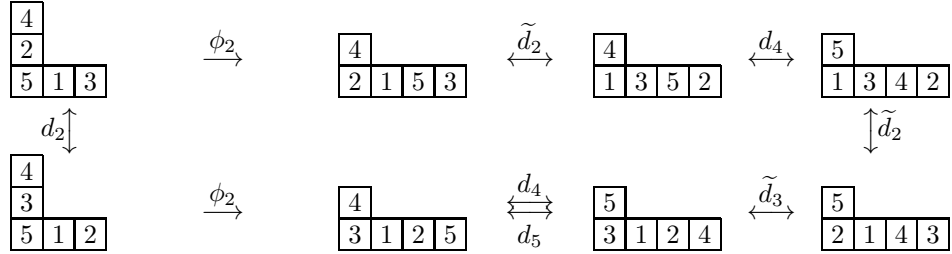

  \begin{displaymath}
    \begin{array}{c@{\hskip 3\cellsize}c@{\hskip 3\cellsize}ccccc}
      \tableau{4 \\ 2 \\ 5 & 1 & 3} &
      \raisebox{-\cellsize}{$\stackrel{\displaystyle\phi_2}{\longrightarrow}$} &
      \tableau{\\ 4 \\ 2 & 1 & 5 & 3} &
      \raisebox{-\cellsize}{$\stackrel{\displaystyle \widetilde{d}_2}{\longleftrightarrow}$} &
      \tableau{\\ 4 \\ 1 & 3 & 5 & 2} &
      \raisebox{-\cellsize}{$\stackrel{\displaystyle d_4}{\longleftrightarrow}$} &
      \tableau{\\ 5 \\ 1 & 3 & 4 & 2} \\[2\cellsize]
      d_2 \rotatebox[origin=c]{90}{$\longleftrightarrow$} & & & & & &
      \rotatebox[origin=c]{90}{$\longleftrightarrow$} \widetilde{d}_2  \\
      \tableau{4 \\ 3 \\ 5 & 1 & 2} &
      \raisebox{-\cellsize}{$\stackrel{\displaystyle\phi_2}{\longrightarrow}$} &
      \tableau{\\ 4 \\ 3 & 1 & 2 & 5} &
      \raisebox{-\cellsize}{$
        \begin{array}{c}
          \stackrel{\displaystyle d_4}{\longleftrightarrow} \\[-1ex]
          \stackrel{\displaystyle\longleftrightarrow}{d_5}
        \end{array} $} &
      \tableau{\\ 5 \\ 3 & 1 & 2 & 4} &
      \raisebox{-\cellsize}{$\stackrel{\displaystyle \widetilde{d}_3}{\longleftrightarrow}$} &
      \tableau{\\ 5 \\ 2 & 1 & 4 & 3}
    \end{array}
  \end{displaymath}
  \caption{\label{fig:noncommute}Illustration of the $(4,1)$-dual equivalence for $42153$ and $43125$.}
\end{figure}

Generalized dual equivalence involutions do not always commute with $\phi_k$. For example, take $\mu = (3,1,1)$ and consider $u=42513$. Then $\phi_2(u) = 42\gamma_2(513) = 42153$ and $\phi_2(D_2^{\mu}(u)) = 43\gamma_3(512) = 43125 \neq D^{(4,1)}_2(42153)$. They are, however, $(4,1)$-dual equivalent, as illustrated in Figure~\ref{fig:noncommute}. 

\begin{lemma}
  Let $u$ be a permutation with $u_{k+1}=3$, $u_j=1$, and $u_i=2$ for some $i>j>k+1$, and let $x = u_{i+1}$. Let $v$ be the permutation with $v_{k+1} = 1$, $v_j = 3$, $v_i = x$, $v_{i+1} = 2$, and $v_h = u_h$ for all other indices $h$. Then $u$ and $v$ are $(n-k,1^k)$-dual equivalent.
  \label{lem:inv}
\end{lemma}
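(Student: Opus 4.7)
The goal is to exhibit an explicit chain of $D_i^{(n-k,1^k)}$-moves taking $u$ to $v$. I would first verify that the two permutations share the statistics required to sit in the same dual equivalence class: $\maj_{(n-k,1^k)}(u)=\maj_{(n-k,1^k)}(v)$ is immediate since the legs of $u$ and $v$ coincide, and $\inv_{(n-k,1^k)}(u)=\inv_{(n-k,1^k)}(v)$ (which in a hook counts only the inversions among positions $k+1,\ldots,n$) follows because the swap $3\leftrightarrow 1$ at positions $k+1$ and $j$ removes the inversion $(3,1)$, the adjacent swap $2\leftrightarrow x$ at positions $i,i+1$ introduces the inversion $(x,2)$, and every other value in the bottom row lies in $\{4,\ldots,n\}\setminus\{x\}$ and so compares identically with both $\{1,3\}$ and $\{2,x\}$ regardless of the arrangement.

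The crucial observation for the chain is that the values $1$, $2$, and $3$ all lie in the bottom row, so the same-row relation provides a potential $(n-k,1^k)$-inversion and forces $D_2^{(n-k,1^k)}=\widetilde{d}_2$ and $D_3^{(n-k,1^k)}=\widetilde{d}_3$. In the favorable sub-case $x=4$, the chain has length two: applying $\widetilde{d}_3$ cyclically rotates the triple $\{2,3,4\}$ (whose positions are $(k+1,i,i+1)$ in $u$, with reading $3,2,4$) to put the values $(2,4,3)$ there, and then applying $\widetilde{d}_2$ rotates $\{1,2,3\}$ (now at positions $(k+1,j,i+1)$ with reading $2,1,3$) to put $(1,3,2)$ there. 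Position $i$ still carries the value $4=x$, and the resulting permutation agrees with $v$ everywhere.

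For general $x\geq 5$ the chain lengthens. When $4$ lies in the arm, I would prepend the two-step chain with an alternating sequence of $\widetilde{d}_3$ and $\widetilde{d}_m$ for $m=4,5,\ldots,x-1$ that bubbles the values $4,5,\ldots,x-1$ through the arm until the pattern among $\{2,3,x\}$ reduces to the favorable $x=4$ template; a concrete instance with $n=7$, $k=2$, $j=4$, $i=6$, $x=5$ and $u_5=4$ is resolved by the four-move chain $\widetilde{d}_3\widetilde{d}_4\widetilde{d}_3\widetilde{d}_2$. When $4$ or some intermediate value lies in the leg, $\widetilde{d}_m$ moves enabled by potential $\mu$-descents between vertically adjacent leg cells, or between the cell just above the corner and the corner itself, can be used to push that value temporarily into the arm so that the arm-based strategy applies.

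The main obstacle is the case analysis on where the values $4,5,\ldots,x$ lie: the bubbling chain changes shape depending on how many of them sit in the leg versus the arm, and in particular on where a $\widetilde{d}_m$ move can be invoked to cross the leg-arm boundary. A cleaner alternative I would attempt is induction on the number of values in $\{4,\ldots,x\}$ whose positions in $u$ lie in the leg, with the base case zero handled directly by the bubbling chain and the inductive step using a single $\widetilde{d}_m$ exchange to reduce this count; in all cases the atomic building block is the favorable two-move chain for $x=4$.
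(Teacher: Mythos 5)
Your base case $x=4$ is correct and is the same computation as the paper's: the chain $\widetilde{d}_2\widetilde{d}_3$ carries $3\cdots 1\cdots 24$ to $1\cdots 3\cdots 42$ irrespective of interspersed letters, which is exactly the paper's observation that $\widetilde{d}_3(3124)=2143=\widetilde{d}_2(1342)$. Note, however, that your opening paragraph verifying $\inv_{\mu}(u)=\inv_{\mu}(v)$ and $\maj_{\mu}(u)=\maj_{\mu}(v)$ does no work toward the conclusion: equality of the statistics is a consequence of dual equivalence, not a criterion for it, so it cannot help place $u$ and $v$ in the same class.

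The genuine gap is the case $x\geq 5$. Your ``alternating sequence of $\widetilde{d}_3$ and $\widetilde{d}_m$'' is exhibited and checked only for one concrete configuration, in which the single intermediate value $4$ happens to sit in the arm between $1$ and $2$; you never specify the chain when intermediate values lie between $3$ and $1$, to the right of $x$, or are scattered among several of these regions, and your closing paragraph concedes that the shape of the chain depends on exactly this unresolved case analysis. The leg-to-arm step is also underspecified: a value $m$ sitting in the leg forms a potential $\mu$-descent only with the entries directly above and below it in the column, and no potential $\mu$-inversion with the rest of the bottom row, so whether $D_m^{\mu}$ acts by $d_m$ or by $\widetilde{d}_m$, and whether it can extract $m$ from the leg at all, depends on where $m\pm 1$ sit; you invoke $\widetilde{d}_m$ uniformly without checking this. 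The paper closes both gaps by inducting on $x$ itself rather than on the number of leg-resident intermediate values: if no element of $\{4,\ldots,x-1\}$ lies in the arm, it conjugates by $d_{x-1}$ to replace $x$ by $x-1$ and applies the inductive hypothesis; otherwise it fixes an intermediate $a$ in the arm, applies the inductive hypothesis to $a$ after ignoring all larger letters, and repairs the position of $x$ with explicit twisted Knuth moves, treating the subcases ``$a$ left of $x$'' and ``$a$ right of $x$'' separately. To complete your argument you would need either to write down the general bubbling chain and prove it terminates at $v$ in every configuration, or to restructure the induction along the paper's lines.
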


\begin{figure}[ht]
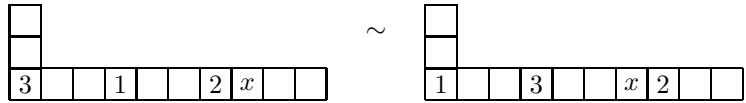

  \begin{displaymath}
    \tableau{ \ \\ \ \\ 3 & \ & \ & 1 & \ & \ & 2 & x & \ & \ }
    \hspace{\cellsize} \sim \hspace{\cellsize}
    \tableau{ \ \\ \ \\ 1 & \ & \ & 3 & \ & \ & x & 2 & \ & \ }
  \end{displaymath}
  \caption{\label{fig:lemma}Illustration of the situation described in Lemma~\ref{lem:inv}.}
\end{figure}

\begin{proof}
  If $x=4$, then $\widetilde{d}_3(3124) = 2143 = \widetilde{d}_2(1342)$, so $3124$ is $(4)$-equivalent to $1342$, and this holds whether other letters are interspersed or not. Therefore we may proceed by induction on $x$ assuming $x>4$. If only $1,2,3,x$ occur in the arm, then we may apply $d_{x-1}$ to exchange $x$ with $x-1$, use induction since $x-1$ is now in the arm, and then apply $d_{x-1}$ again to restore $x$. Therefore we may also assume there is some entry $3<a<x$ also in the arm.

  If $a$ lies left of $x$, then by induction and ignoring letters larger than $a$ (in particular, ignoring $x$), the pattern $13ax2$ may be exchanged for $312xa$. Applying $\widetilde{d}_2$ gives the pattern $231xa$. It is an easy exercise to show that the twisted Knuth move taking $1xa$ is generalized dual equivalent to $a1x$, and so applying $\widetilde{d}_2$ again yields the pattern $31a2x$, proving the result. Similarly, if $a$ lies right of $x$, then by induction and ignoring letters larger than $a$ (in particular, ignoring $x$), the pattern $312xa$ may be exchanged for $13ax2$. Ignoring the $1$ which lies precisely in the corner, using that the twisted Knuth move taking $ax1$ is generalized dual equivalent to $x1a$, we obtain $13x2a$, as desired.
\end{proof}

For example, we see from Figure~\ref{fig:noncommute} that $43125$ and $41352$ are $(4,1)$-equivalent.

\begin{theorem}
  Let $\mu = (m,1^k)$ be a hook partition for some $m,k>0$, and let $\nu = (m+1,1^{k-1})$. For permutations $u,v$, if $u$ and $v$ are $\mu$-equivalent, then $\phi_k(u)$ and $\phi_k(v)$ are $\nu$-equivalent.
  \label{thm:hooks}
\end{theorem}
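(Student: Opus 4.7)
The plan is to reduce, by induction on the length of a sequence of elementary $\mu$-equivalence moves between $u$ and $v$, to verifying the statement for a single move: it suffices to show that for any permutation $u$ and any $1 < i < n$, the words $\phi_k(u)$ and $\phi_k(D_i^\mu(u))$ are $\nu$-equivalent.

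Lemma~\ref{lem:hooks} immediately disposes of the cases $D_i^\mu(u) = u$, $u_k \notin \{i-1, i, i+1\}$, or $u_j \in \{i-1, i, i+1\}$ for some $j < k$: in each of these cases $\phi_k(D_i^\mu(u)) = D_i^\nu(\phi_k(u))$, so $\nu$-equivalence holds via a single elementary move. The remaining hard case is when $D_i^\mu(u) \neq u$, $u_k \in \{i-1, i, i+1\}$, and none of $i-1, i, i+1$ lies in positions $1, \ldots, k-1$. In this situation, the two other letters of $\{i-1, i, i+1\}$ both lie in the arm of $\mu$, and $\phi_k$ may be applied using a different pivot on $u$ than on $D_i^\mu(u)$, producing outputs whose arms differ not only by a transposition of letters but potentially by a different blocking under $\Gamma$.

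My plan for the hard case is to split into subcases according to which of $i-1, i, i+1$ occupies position $k$, and then to analyze how $\Gamma_{u_k}$ partitions the arm $u_{k+1} \cdots u_n$ in comparison with how $\Gamma_{D_i^\mu(u)_k}$ partitions the corresponding arm of $D_i^\mu(u)$. Since $u_1, \ldots, u_{k-1}$ are preserved by $\phi_k$ and contain none of $i-1, i, i+1$, all needed equivalence moves take place entirely within the bottom row of $\nu$. The strategy is then to realize the discrepancy between $\phi_k(u)$ and $\phi_k(D_i^\mu(u))$ as a composition of ordinary elementary $\nu$-equivalences $D_j^\nu$ together with the long-range ``three-letter transport'' move supplied by Lemma~\ref{lem:inv}. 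Figure~\ref{fig:noncommute} is the prototype: the two horizontal chains consist of ordinary elementary $\nu$-equivalences, and the identification of their endpoints is precisely an instance of Lemma~\ref{lem:inv}.

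The principal obstacle is the bookkeeping in the case analysis: there are three subcases for the value of $u_k$, each with further configurations depending on the relative order of $\{i-1, i, i+1\}$ within the arm and on the identity of the letter $x$ that sits adjacent to the migrating value (the extra input Lemma~\ref{lem:inv} requires). Verifying in each configuration that the change of pivot induced by $\phi_k$ corresponds exactly to an application of Lemma~\ref{lem:inv}, possibly flanked by a short chain of elementary $\nu$-equivalences, is the combinatorial heart of the argument.
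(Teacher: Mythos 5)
Your outline follows the paper's own strategy step for step: reduce to a single elementary move $v=D_i^{\mu}(u)$, dispose of the easy cases with Lemma~\ref{lem:hooks}, and in the remaining case compare the $\Gamma$-partitionings of the two arms and patch the discrepancy using Lemma~\ref{lem:inv} together with chains of elementary $\nu$-moves, with Figure~\ref{fig:noncommute} as the model. So the approach is right, and every ingredient you name is the one the paper uses. The problem is that what you defer as ``bookkeeping'' is in fact the entire content of the proof: nothing in your write-up verifies that the change of pivot really does land you in the configuration of Lemma~\ref{lem:inv}, and that verification is not automatic --- it depends on exactly how $\gamma_{u_k}$ and $\gamma_{v_k}$ cut the arm into blocks, which is the one place the two images can genuinely diverge.

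Two concrete points would let you close the gap economically. First, you do not need three subcases for $u_k$: since $d_i$ must move the letter in position $k$ (otherwise $i$ would sit between $i\pm1$ and the move would be trivial), one of $u,v$ has $i$ in position $k$, so by the symmetry of the statement in $u$ and $v$ you may assume $u_k=i$; this also pins down that $D_i^{\mu}(u)=d_i(u)$ while $D_i^{\nu}(\phi_k(u))=\widetilde{d}_i(\phi_k(u))$, since after $\phi_k$ all three letters sit in the bottom row of $\nu$. Second, the case analysis then collapses to a clean dichotomy (for each of $u_{k+1}<i$ and $u_{k+1}>i+1$, and each relative order of $i-1,i+1$): either the two arm letters of $\{i-1,i,i+1\}$ lie in the same $\Gamma$-block of $u$, in which case one checks directly that $\phi_k(v)=\widetilde{d}_i(\phi_k(u))$ exactly, so no auxiliary moves are needed at all; or they lie in different blocks, in which case deleting the letters outside the relevant range and order-reversing the rest puts $\phi_k(v)$ and $\widetilde{d}_i(\phi_k(u))$ \emph{precisely} in the configuration of Lemma~\ref{lem:inv}, and the conclusion follows by transitivity through $\widetilde{d}_i(\phi_k(u))\sim_{\nu}\phi_k(u)$. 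Without exhibiting this dichotomy (or something equivalent), the claim that the discrepancy ``corresponds exactly to an application of Lemma~\ref{lem:inv}, possibly flanked by a short chain of elementary $\nu$-equivalences'' is an assertion of the theorem in the hard case rather than a proof of it.
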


\begin{proof}
  It is enough to prove the result for $u$ and $v = D_i^{\mu}(u)$. By Lemma~\ref{lem:hooks}, we may assume that $D_i^{\mu}(u)\neq u$, $u_k \in \{i-1,i,i+1\}$, and the remaining two of $i-1,i,i+1$ are among $u_{k+1} \cdots u_{k+m}$. In particular, this implies $D_i^{\mu}(u) = d_i(u)$ and $D_i^{\nu}(\phi_k(u)) = \widetilde{d}_i(\phi_k(u))$. By symmetry between $u$ and $v$, we may assume $u_k=i$. We treat in detail the case where $i-1$ is left of $i+1$, noting that the alternative case is completely analogous.

  We compare $\Gamma_i(u_{k+1} \cdots u_n)$ with $\Gamma_{i+1}(v_{k+1} \cdots v_n)$. Since $v = d_i(u)$, we must have $u_{k+1} = v_{k+1}$ and neither is $i$ or $i+1$. Suppose first that $u_{k+1}<i$. If $i-1$ and $i+1$ occur within the same $\Gamma_i$-block of $u$, then we have
  \begin{displaymath}
    \begin{array}{rclcr}
      \Gamma_i(u_{k+1} \cdots u_n) & = & ( \cdots \mid (i-1) A (i+1) B \mid \cdots )
      & \stackrel{\displaystyle\gamma_{i}}{\longmapsto} & i \cdots A (i+1) B (i-1) \cdots \\
      \Gamma_{i+1}(v_{k+1} \cdots v_n) & = & ( \cdots \mid (i-1) A \mid (\makebox[1.2em]{$i$}) B \mid \cdots )
      & \stackrel{\displaystyle\gamma_{i+1}}{\longmapsto} & i+1 \cdots A (i-1) B (\makebox[2em]{$i$}) \cdots 
    \end{array}
  \end{displaymath}
  where $A,B$ are words containing letters larger than $i+1$. Therefore $D_i^{\nu}(\phi_k(u)) = \widetilde{d}_i(\phi_k(u)) = \phi_k(v) = \phi_k(d_i(u)) = \phi_k(D_i^{\mu}(u))$. If $i-1$ and $i+1$ do not occur within the same $\Gamma_i$-block of $u$, then computing the $\Gamma$-partitioning and applying $\gamma$, we have
  \begin{displaymath}
    \begin{array}{rlcr}
      \Gamma_i : & 
      ( \cdots \mid (i-1) A \mid \cdots \mid x B (i+1) C \mid \cdots )
      & \stackrel{\displaystyle\gamma_{i}}{\longmapsto} &
      i \cdots A (i-1) \cdots B (i+1) C x \cdots \\
      \Gamma_{i+1} : &
      ( \cdots \mid (i-1) A \mid \cdots \mid x B \mid (\makebox[1.2em]{$i$}) C \mid \cdots )
      & \stackrel{\displaystyle\gamma_{i+1}}{\longmapsto} & 
      i+1 \cdots A (i-1) \cdots B x C (\makebox[2em]{$i$}) \cdots
    \end{array}
  \end{displaymath}
  where $A,B,C$ are words containing letters larger than $i+1$, and $x<i-1$ is a letter. If we consider $\phi_k(v)$ and $\widetilde{d}_i(\phi_k(u))$ where we delete all letters larger than $i+1$ and change smaller letters $a$ to $(i+1)-a+2$, we are precisely in the situation of Lemma~\ref{lem:inv}. Reversing this change and replacing larger letters does not affect the equivalence, and so $\phi_k(v)$ and $\widetilde{d}_i(\phi_k(u))$ are $\nu$-dual equivalent. Since $i-1,i,i+1$ all lie in the arm of $\nu$, we have $\phi_k(u)$ and $\widetilde{d}_i(\phi_k(u))$ are $\nu$-dual equivalent as well. Therefore, by transitivity, $\phi_k(u)$ and $\phi_k(v)$ are $\nu$-dual equivalent.

  For the case  $u_{k+1}>i+1$, we apply the same analysis. If $i-1$ and $i$ occur within the same $\Gamma_{i+1}$-block of $v$, then we see that $D_i^{\nu}(\phi_k(u)) = \widetilde{d}_i(\phi_k(u)) = \phi_k(v) = \phi_k(d_i(u)) = \phi_k(D_i^{\mu}(u))$, giving the desired $\nu$-dual equivalence. If $i-1$ and $i$ do not occur within the same $\Gamma_{i+1}$-block of $v$, then deleting letters smaller than $i-1$ and changing larger letters $a$ to $a-(i-1)+1$ puts $\phi_k(v)$ and $\widetilde{d}_i(\phi_k(u))$ precisely in the situation of Lemma~\ref{lem:inv}, and so $\phi_k(v)$ and $\widetilde{d}_i(\phi_k(u))$ are $\nu$-dual equivalent. Again, since $i-1,i,i+1$ all lie in the arm of $\nu$, by transitivity  $\phi_k(u)$ and $\phi_k(v)$ are $\nu$-dual equivalent.
\end{proof}

Since, by Proposition~\ref{prop:gamma-iDes}, $\phi_k$ preserves the inverse descent set, Theorem~\ref{thm:hooks} establishes Conjecture~\ref{conj:class} for hooks and gives the following explicit formula.

\begin{corollary}
  For $\mu = (n-k,1^k)$ a hook partition, set $\varphi_{\mu} = \phi_{k+1} \cdots \phi_{n-1}$. Then we have
  \begin{equation}
    \widetilde{H}_{\mu}(X;q,t) = \sum_{\lambda} \left( \sum_{u \in \mathrm{SS}(\lambda)} q^{\inv_{\mu}(\varphi_{\mu}(u))} t^{\maj_{\mu}(\varphi_{\mu}(u))} \right) s_{\lambda}(X) .
  \end{equation}
  \label{cor:hooks}
\end{corollary}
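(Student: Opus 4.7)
The plan is to iterate Theorem~\ref{thm:hooks} along the chain of hooks $(1^n) \rightarrow (2,1^{n-2}) \rightarrow \cdots \rightarrow (n-k,1^k)$ to realize each generalized $\mu$-dual equivalence class as the $\varphi_\mu$-image of a union of standard dual equivalence classes, and then invoke Theorem~\ref{thm:maj} to replace each such union by a Schur function.

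First, I would group the sum in Definition~\ref{def:macdonald} by generalized $\mu$-dual equivalence classes. Since the involutions $D_i^\mu$ preserve both $\inv_\mu$ and $\maj_\mu$ (noted after Definition~\ref{def:Di}), the weight $q^{\inv_\mu(w)}t^{\maj_\mu(w)}$ is constant on each class $C$, so
\begin{displaymath}
\widetilde{H}_\mu(X;q,t) = \sum_{C} q^{\inv_\mu(C)} t^{\maj_\mu(C)} \sum_{v\in C} F_{\iDes(v)}(X) .
\end{displaymath}
Next, I apply Theorem~\ref{thm:hooks} inductively for $j = n-1, n-2, \ldots, k+1$ with the pair $(\mu,\nu) = ((n-j,1^{j}),(n-j+1,1^{j-1}))$. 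The starting shape $(1^n)$ has generalized dual equivalence classes that coincide with the ordinary standard dual equivalence classes of Definition~\ref{def:switch}, as observed in the introduction. Composing the containments provided by Theorem~\ref{thm:hooks} shows that the bijection $\varphi_\mu = \phi_{k+1} \circ \cdots \circ \phi_{n-1}$ sends each standard dual equivalence class into a single $\mu$-dual equivalence class. Because each $\phi_j$, hence $\varphi_\mu$, is a bijection on $\mathfrak{S}_n$, it follows that the preimage $\varphi_\mu^{-1}(C)$ of any $\mu$-class $C$ is a disjoint union of standard dual equivalence classes.

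Finally, iterating Proposition~\ref{prop:gamma-iDes} with $x = w_k$ and $v = w_{k+1}\cdots w_n$ at each step gives $\iDes(\varphi_\mu(u)) = \iDes(u)$ for every $u \in \mathfrak{S}_n$. Substituting $v = \varphi_\mu(u)$ in the inner sum and decomposing $\varphi_\mu^{-1}(C)$ into standard dual equivalence classes, Theorem~\ref{thm:maj} replaces each such class by a Schur function $s_\lambda(X)$ indexed by the weight $\lambda$ of its unique super-standard representative. Reindexing the overall sum first by that super-standard representative $u$ and then by $\lambda$, and using that $\inv_\mu, \maj_\mu$ are constant on $C = [\varphi_\mu(u)]_\mu$, collects precisely the claimed formula. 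The only non-routine step is Theorem~\ref{thm:hooks} itself, which is already in hand; the remaining obstacle is the purely bookkeeping task of verifying that the chain of hook shapes matches the composition order of the $\phi_j$ and that $\iDes$-preservation propagates through the composition.
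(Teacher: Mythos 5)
Your proposal is correct and follows essentially the same route as the paper: the paper derives Corollary~\ref{cor:hooks} directly from Theorem~\ref{thm:hooks} iterated along the chain of hooks starting at $(1^n)$, together with the $\iDes$-preservation of Proposition~\ref{prop:gamma-iDes}, the constancy of $\inv_{\mu}$ and $\maj_{\mu}$ on generalized classes, and Theorem~\ref{thm:maj} to convert each standard class into a Schur function indexed by its super-standard representative. Your write-up simply makes explicit the bookkeeping (composition order of the $\phi_j$, preimages of $\mu$-classes as unions of standard classes) that the paper leaves to the reader.
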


\begin{figure}[ht]
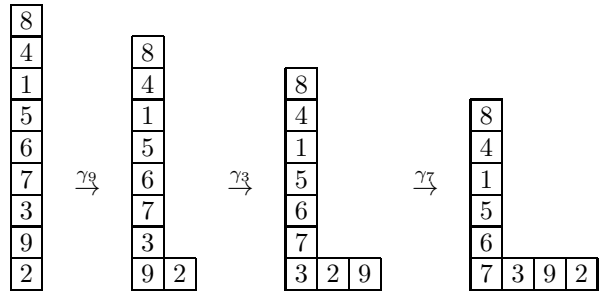

  \begin{displaymath}
    \tableau{ 8 \\ 4 \\ 1 \\ 5 \\ 6 \\ 7 \\ 3 \\ 9 \\ 2 }
    \hspace{\cellsize} \raisebox{-5\cellsize}{$\stackrel{\gamma_9}{\rightarrow}$} \hspace{\cellsize}
    \tableau{\\ 8 \\ 4 \\ 1 \\ 5 \\ 6 \\ 7 \\ 3 \\ 9 & 2 }
    \hspace{\cellsize} \raisebox{-5\cellsize}{$\stackrel{\gamma_3}{\rightarrow}$} \hspace{\cellsize}
    \tableau{\\ \\ 8 \\ 4 \\ 1 \\ 5 \\ 6 \\ 7 \\ 3 & 2 & 9 }
    \hspace{\cellsize} \raisebox{-5\cellsize}{$\stackrel{\gamma_7}{\rightarrow}$} \hspace{\cellsize}
    \tableau{\\ \\ \\ 8 \\ 4 \\ 1 \\ 5 \\ 6 \\ 7 & 3 & 9 & 2 }
  \end{displaymath}
  \caption{\label{fig:fold-hook}Folding $(1^9)$ to $(4,1^5)$.}
\end{figure}

See Figure~\ref{fig:fold-hook} for details of the map $\varphi_{(4,1^5)}$ on the permutation $841567392$. Notice that the inverse descent set for all four fillings is $\{2,3,7\}$. From left to right, the Macdonald weights are $t^{17}$, $q t^{9}$, $q t^{9}$, $q^4 t^{3}$. We emphasize that in Corollary~\ref{cor:hooks}, the weights change with the bijection, but the set of permutations that determines the Macdonald polynomial is the same for every partition.

%
\section{Folding the legs}
%
\label{sec:schur}

Refining Foata's bijection, Assaf \cite{Ass08} constructed a family of bijections on words that preserve an interpolating statistic between major index and inversion number. We focus on the first in this family of bijections, which makes use a bijection $\beta_x$ indexed by a letter $x$. The idea of $\beta_x$ is to swap adjacent pairs of letters, percolating through the word based on a straddling condition.

Given a word $w$ and a letter $x$ not in $w$, let $B_x$ be the set of indices defined recursively as follows: if $x$ has value between $w_1,w_2$, then $1 \in B_x$; if $i\in B_x$, then if exactly one of $w_i,w_{i+1}$ has value between $w_{i+2},w_{i+3}$, then $i+2 \in B_x$. For example,
\[ B_5(83691724) = \{1, 3, 5\}. \]
The bijection $\beta_x$ is defined by swapping $w_i$ and $w_{i+1}$ for every $i \in B_x$. Continuing with the example,
\[ \beta_5(83691724) = 38967124. \]

\begin{proposition}
  For $u,v,y$ words and $x$ a letter, we have $\iDes(uxvy) = \iDes(ux\beta_x(v)y)$.
  \label{prop:beta-iDes}
\end{proposition}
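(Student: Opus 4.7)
The plan is to mirror the strategy of Proposition~\ref{prop:gamma-iDes}: show that for every pair of consecutive integer values $k, k+1$, their relative order in $uxvy$ agrees with their relative order in $ux\beta_x(v)y$. Since $\iDes(w)$ records exactly whether $k+1$ lies left of $k$ for each $k$, this equality of relative orders will immediately yield the desired equality of inverse descent sets.

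First I would observe that $\beta_x$ permutes only the positions occupied by $v$, so for each letter $a$ lying in $ux$ or $y$ and each letter $b$ in $v$, their relative order is automatically preserved. The only pairs whose relative order could possibly change are pairs of letters both within $v$. Inside $v$, $\beta_x$ acts as the product of the adjacent transpositions $(v_i, v_{i+1})$ for $i \in B_x$. Since the recursive definition of $B_x$ starts at index $1$ and proceeds in increments of $2$, the elements of $B_x$ are pairwise at distance at least $2$, so these transpositions are disjoint and commute. It follows that a pair of positions $\{a, b\}$ in $v$ with $\{a, b\} \neq \{i, i+1\}$ for every $i \in B_x$ appears in the same relative order in $v$ and in $\beta_x(v)$.

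The crucial step is to show that for each $i \in B_x$, the entries $v_i, v_{i+1}$ differ by at least $2$, so that $\{v_i, v_{i+1}\}$ is never a pair of consecutive integers $\{k, k+1\}$. I would prove this by induction on the recursive construction of $B_x$. For the base case, $1 \in B_x$ demands that $x$ lie strictly between $v_1$ and $v_2$; as all three are integers, this forces $|v_1 - v_2| \geq 2$. For the inductive step, $i+2 \in B_x$ requires exactly one of $v_i, v_{i+1}$ to lie strictly between $v_{i+2}$ and $v_{i+3}$, so in particular some integer strictly separates $v_{i+2}$ from $v_{i+3}$, yielding $|v_{i+2} - v_{i+3}| \geq 2$. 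Combined with the preceding paragraph, every pair of consecutive integers retains its relative order, giving $\iDes(uxvy) = \iDes(ux\beta_x(v)y)$. I do not anticipate any real obstacle here; the only subtlety is spotting that the \emph{strictly between} clauses built into the definition of $B_x$ are precisely what enforces non-consecutive swaps, in perfect analogy with the role that the $\Gamma_x$-blocks play in the argument for $\gamma_x$.
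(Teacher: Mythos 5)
Your proof is correct and follows essentially the same route as the paper, which compresses the entire argument into one sentence: $\beta_x$ only exchanges adjacent letters that are necessarily non-consecutive in value, so the relative order of every pair of consecutive entries is unchanged. Your expansion — noting that the ``strictly between'' clauses in the definition of $B_x$ force $|v_i - v_{i+1}| \geq 2$ at every swapped position — is exactly the justification the paper leaves implicit.
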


\begin{proof}
  Since $\beta_x$ exchanges pairs of adjacent letters that are necessarily not consecutive, the relative order of any pair of consecutive entries remains unchanged. 
\end{proof}

\begin{figure}[ht]
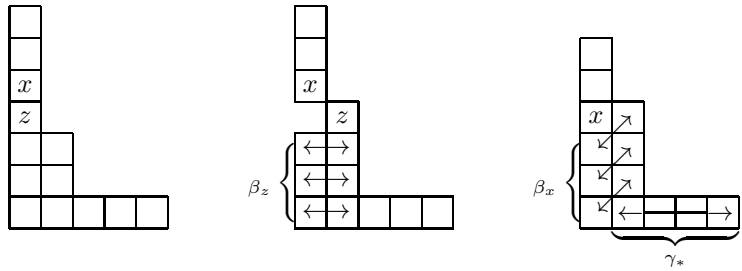

  \begin{displaymath}
    \tableau{ \ \\ \ \\ x \\ z \\ \ & \ \\ \ & \ \\ \ & \ & \ & \ & \ } \hspace{4\cellsize}
    \raisebox{-4.75\cellsize}{\makebox[0pt]{$_{\beta_z} \left\{ \tableau{\\ \\ } \right.$ }}
    \tableau{ \ \\ \ \\ x \\ & z \\
      \ & \makebox[0pt]{\hspace{-\cellsize}$\longleftrightarrow$} \\
      \ & \makebox[0pt]{\hspace{-\cellsize}$\longleftrightarrow$} \\
      \ & \makebox[0pt]{\hspace{-\cellsize}$\longleftrightarrow$} & \ & \ & \ } \hspace{4\cellsize}
    \raisebox{-4.75\cellsize}{\makebox[0pt]{$_{\beta_x} \left\{ \tableau{\\ \\ } \right.$ }}
    \tableau{ \\ \ \\ \ \\ x & \ \\
      \raisebox{1.5\cellsize}{\makebox[0pt]{\hspace{1.25\cellsize}\rotatebox[origin=c]{45}{$\longleftrightarrow$}}} & \ \\
      \raisebox{1.5\cellsize}{\makebox[0pt]{\hspace{1.25\cellsize}\rotatebox[origin=c]{45}{$\longleftrightarrow$}}} & \ \\
      \raisebox{1.5\cellsize}{\makebox[0pt]{\hspace{1.25\cellsize}\rotatebox[origin=c]{45}{$\longleftrightarrow$}}} & \hfill\raisebox{-.5\cellsize}{$\leftarrow$} & \line(1,0){12} & \line(1,0){12} & \raisebox{-.5\cellsize}{$\rightarrow$}\hfill }
    \hspace{-4\cellsize} \raisebox{-6\cellsize}{$\underbrace{\hspace{4\cellsize}}_{\gamma_*}$}
  \end{displaymath}
  \caption{\label{fig:tower}Sliding the leg and dropping the foot.}
\end{figure}

Figure~\ref{fig:tower} illustrates the manner in which we will use $\beta_x$ along with $\gamma_x$ to relate generalized dual equivalence classes for partitions with second part at most $2$. Define a family of bijections $\sigma_{k}$ by
\begin{equation}
  \sigma_{(k,m)} (w) = w_1 \cdots w_k \beta_{w_k}( w_{k+1} \cdots w_{k+m} ) w_{k+m+1} \cdots w_n .
\end{equation}

If we extend Definition~\ref{def:macdonald} to arbitrary diagrams, then we may consider the quasisymmetric generating function associated with the middle diagram in Figure~\ref{fig:tower} and its corresponding generalized dual equivalence classes. Extending \cite{Ass08}(Theorem~5.3), we have the following commutativity.

\begin{lemma}
  Let $\mu = (n-2b-a,2^b,1^a)$, and let $\delta$ be the diagram obtained by moving the $a$th cell from the top to the second column. Then $\sigma_{(a,2b+2)}(D_i^{\mu}(u)) = D_i^{\delta}(\sigma_{(a,a+2b+2)}(u))$.
  \label{lem:diagram}
\end{lemma}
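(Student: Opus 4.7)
The plan is to carry out a careful case analysis on the positions and values of $i-1, i, i+1$ within the filling, adapting the strategy of \cite{Ass08}(Theorem~5.3). Since $\sigma$ is the identity outside the subword on which $\beta_{w_a}$ acts, and since $\beta_{w_a}$ itself only interchanges adjacent pairs whose values satisfy the straddling condition relative to $w_a$, the interactions to analyze are local to the two-column block. By Proposition~\ref{prop:beta-iDes}, $\sigma$ preserves the inverse descent set, so fixed points of $D_i^{\mu}$ are carried to fixed points of $D_i^{\delta}$, and I may restrict attention to the case where $D_i^{\mu}(u) \neq u$.

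First I would dispose of the routine cases. When the triple $\{i-1, i, i+1\}$ lies entirely outside the region touched by $\beta_{w_a}$, the equality is immediate since both $D_i^{\mu}$ and $D_i^{\delta}$ act on letters that $\sigma$ leaves in place. When the triple lies inside this region but neither involves the pivot $w_a$ nor straddles the cell whose column changes between $\mu$ and $\delta$, the involutions $D_i^{\mu}$ and $D_i^{\delta}$ agree as the untwisted $d_i$, and the commutation $\beta_{w_a} \circ d_i = d_i \circ \beta_{w_a}$ follows by checking that swapping $i$ with $i \pm 1$ does not alter which consecutive pairs straddle $w_a$ in value, since $i-1, i, i+1$ are all on the same side of $w_a$ up to a single boundary case.

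The heart of the proof is the case in which the triple straddles the portion of the filling where $\mu$ and $\delta$ differ, so that $D_i^{\mu}$ uses $\widetilde{d}_i$ while $D_i^{\delta}$ uses $d_i$ (or vice versa). Here I would lay the $B_{w_a}$-partitionings of $u$ and of $D_i^{\mu}(u)$ side by side, as in the proof of Theorem~\ref{thm:hooks}, and verify that the single extra or relocated swap introduced by $\beta_{w_a}$ realizes exactly the difference between the cyclic rotation of $\widetilde{d}_i$ and the transposition of $d_i$ on the relevant letters. An auxiliary pattern-swap argument in the spirit of Lemma~\ref{lem:inv} should handle the subcase where the percolating chain of swaps in $\beta_{w_a}$ encounters one of $\{i-1, i, i+1\}$ and has to be rerouted.

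The main obstacle will be the bookkeeping in this last case: the straddling set $B_{w_a}$ can be sensitive to moving a letter from $i$ to $i \pm 1$, and one must check that the resulting change in the percolation sites exactly matches the change in type of dual equivalence move. I expect this to require several sub-cases distinguishing whether $w_a$ itself lies in $\{i-1, i, i+1\}$ from the cases where only neighboring letters do, and distinguishing the relative order of $i-1$ and $i+1$ within the triple, closely mirroring and slightly refining the analysis of \cite{Ass08}(Theorem~5.3) for the straight two-column case.
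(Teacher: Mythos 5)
Your skeleton matches the paper's: dispose of fixed points via Proposition~\ref{prop:beta-iDes}, handle the cases where the triple $\{i-1,i,i+1\}$ is untouched by $\beta_{u_a}$ or where $D_i^{\mu}$ and $D_i^{\delta}$ act by the same operator, and then treat the case where one acts by $d_i$ and the other by $\widetilde{d}_i$. You even identify the right mechanism for the hard case: the extra swap performed by $\beta_{u_a}$ must account for the discrepancy between $d_i$ and $\widetilde{d}_i$, which is exactly the transposition of $i-1$ and $i+1$. What is missing is the single observation that collapses the whole case analysis and makes the lemma an honest equality: $\beta_{u_a}$ can \emph{never} exchange $i$ with $i\pm 1$, because no letter has value strictly between two consecutive values, so the straddling condition defining $B_{u_a}$ is never met for such a pair; and it exchanges $i-1$ with $i+1$ only when they are the leftmost adjacent pair in a row with $i$ directly above --- which is precisely the configuration in which $\mu$ and $\delta$ place $i$ over different members of the pair, so that exactly one of $D_i^{\mu}$, $D_i^{\delta}$ is twisted. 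Relatedly, your worry that ``$B_{w_a}$ can be sensitive to moving a letter from $i$ to $i\pm 1$'' is dispatched by noting that any letter $j>i+1$ or $h<i-1$ compares identically with each of $i-1,i,i+1$, so permuting the triple among themselves does not change the percolation outside the triple.

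The concrete step that would fail is your proposed fallback: ``an auxiliary pattern-swap argument in the spirit of Lemma~\ref{lem:inv}'' for a rerouted percolation chain. Lemma~\ref{lem:inv} produces a chain of elementary $\nu$-equivalences, i.e., it proves that two fillings lie in the same class --- it cannot prove the stated conclusion, which is the exact identity $\sigma_{(a,2b+2)}(D_i^{\mu}(u)) = D_i^{\delta}(\sigma_{(a,2b+2)}(u))$. (That weaker, equivalence-only mode of argument is genuinely needed later, in Theorems~\ref{thm:hooks} and~\ref{thm:tower}, but the present lemma is used precisely because it gives strict commutation.) Once you add the observation above, the feared ``rerouting'' subcase does not arise and the fallback becomes unnecessary; without it, your proof either proves a weaker statement or has a hole in the hard case.
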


\begin{proof}
  For any diagram $\lambda$ and any permutation $w$, $D_i^{\lambda}(w) = w$ if and only if both or neither of $i-1,i$ are inverse descents of $w$. By Proposition~\ref{prop:beta-iDes}, $\phi_k$ preserves the inverse descent set, and so $D_i^{\mu}(u) = u$ if and only if $D_i^{\delta}(\sigma_{(k,m)}(u)) = u$. Therefore assume that $D_i^{\mu}$ acts non-trivially on $u$.

  Let $z = u_a$, and consider the action of $\beta_{z}$ on $u_{a+1} \cdots u_{a+2b+2}$. From the definition, $\beta_z$ cannot exchange $i$ and $i\pm 1$ since no letter has value between these. Similarly, $\beta_{z}$ exchanges $i-1$ and $i+1$ only if they are the leftmost pair in the same row and $i$ is in the row above. When this is not the case, both $D_i^{\mu}$ and $D_i^{\delta}$ act by the same operator, either $d_i$ or $\widetilde{d}_i$, and the actions are easily seen to commute with $\sigma_{(a,2b+2)}$ since letters $j>i+1$ or $h<i-1$ compare the same with each of $i-1,i,i+1$. When this is the case, one of $D_i^{\mu}$ and $D_i^{\delta}$ acts by $d_i$ and the other by $\widetilde{d}_i$, since in one case the $i$ will be above the left letter and so the action is $d_i$ and in the other it will be above the right forcing the action by $\widetilde{d}_i$. Since the difference between these two actions is precisely exchanging $i-1$ and $i+1$, we again have $\sigma_{(a,2b+2)}(D_i^{\mu}(u)) = D_i^{\delta}(\sigma_{(a,a+2b+2)}(u))$.
\end{proof}

To compare generalized dual equivalences classes for $(n-2b-a, 2^b, 1^a)$ with those for $(n-2b-a, 2^{b+1}, 1^{a-2})$, we use the composite map
\begin{equation}
  \phi_{(a,b)}(w) = \left\{ \begin{array}{rl}
    \phi_{a+2b+1} \sigma_{(a-1,2b+2)} \sigma_{(a,2b+2)}(w)
    & \mbox{if $\sigma_{(a-1,2b+2)}$ changes $\sigma_{(a,2b+2)} (w)_{a+2b+1}$,} \\
    \sigma_{(a-1,2b+2)} \sigma_{(a,2b+2)} (w)
    & \mbox{otherwise.}
  \end{array} \right.
\end{equation}

\begin{theorem}
  Let $\mu = (n-2b-a,2^b,1^a)$, and let $\nu = (n-2b-a,2^{b+1},1^{a-2})$. For permutations $u,v$, if $u$ and $v$ are $\mu$-equivalent, then $\phi_{(a,b)}(u)$ and $\phi_{(a,b)}(v)$ are $\nu$-equivalent.
  \label{thm:tower}
\end{theorem}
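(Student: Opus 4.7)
The plan is to follow the blueprint of Theorem~\ref{thm:hooks}, substituting the two-step fold $\phi_{(a,b)}$ for the single Foata step $\phi_k$. By transitivity, it suffices to treat the case $v = D_i^{\mu}(u)$. By Lemma~\ref{lem:diagram}, we may commute $\sigma_{(a,2b+2)}$ past $D_i^{\mu}$ on the nose, landing both $u$ and $v$ as $D_i^{\delta}$-equivalent words on the intermediate diagram $\delta$ carrying a single column-$2$ bump at height $a$.

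I would then establish a companion of Lemma~\ref{lem:diagram} asserting that $\sigma_{(a-1,2b+2)}$ commutes with $D_i^{\delta}$, proved by essentially the same argument: the key point is that $\beta_z$ exchanges letters straddled by $z$ only when they lie at the two ends of a length-two row with $z$ sitting above, so the only failure of commutativity with $d_i$ versus $\widetilde{d}_i$ again amounts to the swap of $i-1$ and $i+1$, which is exactly the difference between $d_i$ and $\widetilde{d}_i$. Applying the two slides in sequence produces images on a diagram $\delta'$ that equals $\nu$ exactly when $\sigma_{(a-1,2b+2)}$ fixes the entry in position $a+2b+1$. When it does not, $\delta'$ carries a leftover floating cell above the new top row of length two, and the composite $\phi_{a+2b+1}$ then plays precisely the role of the hook fold from Section~\ref{sec:foata}; we invoke Theorem~\ref{thm:hooks} directly to conclude that applying this last step preserves $\nu$-equivalence.

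The main obstacle will be the asymmetric case in which $\sigma_{(a-1,2b+2)}$ changes position $a+2b+1$ on one of $u,v$ but not the other, so that $\phi_{(a,b)}$ is defined by the two-term formula on one side and the three-term formula on the other. Here I expect the arguments of Lemma~\ref{lem:inv} and the proof of Theorem~\ref{thm:hooks} to go through with only cosmetic changes: the two images will differ by an exchange pattern on $\{i-1,i,i+1,x\}$ of precisely the form resolved in Lemma~\ref{lem:inv}, and the familiar case split according to whether the pivot letter $z = u_{a-1}$ (or $u_a$) is below $i-1$ or above $i+1$, as in the proof of Theorem~\ref{thm:hooks}, should control which $\Gamma$- and $B$-block structures occur. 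Stitching the two slides together with this bridging $\nu$-dual-equivalence path then yields the desired conclusion by transitivity.
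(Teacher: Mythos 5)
Your proposal follows essentially the same route as the paper's proof: reduce to $v = D_i^{\mu}(u)$, commute the slides $\sigma_{(a,2b+2)}$ and $\sigma_{(a-1,2b+2)}$ past the involution via Lemma~\ref{lem:diagram} (and its companion for the second slide, which the paper uses implicitly), and resolve the final $\phi_{a+2b+1}$ step by the case analysis of Theorem~\ref{thm:hooks} and Lemma~\ref{lem:inv}. You are in fact somewhat more explicit than the paper about the two points it glosses over --- the need for a second-slide analogue of Lemma~\ref{lem:diagram} and the asymmetric case where the two-term versus three-term definition of $\phi_{(a,b)}$ applies to only one of $u,v$ --- so this is a faithful, if slightly more careful, version of the published argument.
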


\begin{proof}
  It is enough to prove the result for $u$ and $v = D_i^{\mu}(u)$. By Lemma~\ref{lem:diagram}, if $\sigma_{(a-1,2b+2)}$ does not alter $\sigma_{(a,2b+2)} (u)_{a+2b+1}$, then $\phi_{(a,b)}(D_i^{\mu}(u)) = D_i^{\nu}(\phi_{(a,b)}(u))$, and so $\phi_{(a,b)}(u)$ and $\phi_{(a,b)}(v)$ differ by an elementary $\nu$-equivalence. Similarly, by Lemma~\ref{lem:hooks}, if $\sigma_{(a,2b+2)} (u)_{a+2b+1}$ is not one of $i-1,i,i+1$ or if one of $i-1,i,i+1$ is not in the bottom row, then again $\phi_{(a,b)}(D_i^{\mu}(u)) = D_i^{\nu}(\phi_{(a,b)}(u))$, and so again $\phi_{(a,b)}(u)$ and $\phi_{(a,b)}(v)$ differ by an elementary $\nu$-equivalence. Finally, the analysis in the proof of Theorem~\ref{thm:hooks} resolves the case when $\sigma_{(a,2b+2)} (u)_{a+2b+1}$ is one of $i-1,i,i+1$ and the others lie in the bottom row. In this final case, $\phi_{(a,b)}(u)$ and $\phi_{(a,b)}(v)$ are $\nu$-equivalent though not necessarily by a single elementary $\nu$-equivalence.
\end{proof}

By Propositions~\ref{prop:gamma-iDes} and \ref{prop:beta-iDes}, $\phi_k$ and $\phi_{(a,b)}$ preserve the inverse descent set. Therefore Theorems~\ref{thm:hooks} and \ref{thm:tower} together establish Conjecture~\ref{conj:class} for partitions with second part at most $2$. Moreover, we have the following explicit formula for the Macdonald polynomial.

\begin{corollary}
  For $\mu = (n-2b-a,2^b,1^a)$, set $\varphi_{\mu} = \phi_{(a+2,b-2)} \cdots \phi_{(a+2b,0)} \phi_{a+2b+1} \cdots \phi_{n-1}$. Then
  \begin{equation}
    \widetilde{H}_{\mu}(X;q,t) = \sum_{\lambda} \left( \sum_{u \in \mathrm{SS}(\lambda)} q^{\inv_{\mu}(\varphi_{\mu}(u))} t^{\maj_{\mu}(\varphi_{\mu}(u))} \right) s_{\lambda}(X) .
  \end{equation}
  \label{cor:tower}
\end{corollary}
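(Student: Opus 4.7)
The plan is to derive the formula directly from the combinatorial definition of $\widetilde{H}_\mu$ (Definition~\ref{def:macdonald}) by a change of variables $w = \varphi_\mu(u)$ followed by a regrouping over standard dual equivalence classes. First I would start from
\[
\widetilde{H}_\mu(X;q,t) = \sum_{w \in \mathfrak{S}_n} q^{\inv_\mu(w)} t^{\maj_\mu(w)} F_{\iDes(w)}(X).
\]
Each factor $\phi_k$ of $\varphi_\mu$ is a bijection that preserves $\iDes$ by Proposition~\ref{prop:gamma-iDes}, and each $\phi_{(a,b)}$ is a composite of $\sigma$'s and $\phi$'s that preserve $\iDes$ by Propositions~\ref{prop:gamma-iDes} and~\ref{prop:beta-iDes}. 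Hence the composite $\varphi_\mu$ is an $\iDes$-preserving bijection on $\mathfrak{S}_n$, and substituting $w = \varphi_\mu(u)$ transforms the sum into
\[
\widetilde{H}_\mu(X;q,t) = \sum_{u \in \mathfrak{S}_n} q^{\inv_\mu(\varphi_\mu(u))} t^{\maj_\mu(\varphi_\mu(u))} F_{\iDes(u)}(X).
\]

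Next I would partition this sum by standard dual equivalence classes on $\mathfrak{S}_n$. Iterating Theorems~\ref{thm:hooks} and~\ref{thm:tower} along the folding sequence from $(1^n)$ up through the intermediate hooks $(m, 1^{n-m})$ and then through the shapes $(n-2b-a, 2^c, 1^{2(b-c)+a})$ to $\mu$, one sees that $\varphi_\mu$ sends each standard dual equivalence class $C$ into a single $\mu$-dual equivalence class. Because $\inv_\mu$ and $\maj_\mu$ are constant on $\mu$-dual equivalence classes (as observed in the remarks preceding Conjecture~\ref{conj:class}), the Macdonald weight $q^{\inv_\mu(\varphi_\mu(u))} t^{\maj_\mu(\varphi_\mu(u))}$ depends only on $C$ and may be pulled outside the inner sum. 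By Theorem~\ref{thm:maj}, the remaining inner sum $\sum_{u\in C} F_{\iDes(u)}(X)$ equals $s_{\lambda(C)}(X)$, where $\lambda(C)$ is the weight of the unique super-standard representative of $C$. Indexing classes by their super-standard representatives gives the claimed formula.

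The heavy lifting is already done in Theorems~\ref{thm:hooks} and~\ref{thm:tower}, whose proofs showed that the bijections $\phi_k$ and $\phi_{(a,b)}$ merge dual equivalence classes compatibly with each step of the folding. What remains here is essentially a bookkeeping argument stitching these class-merging statements together with the Schur expansion of standard dual equivalence classes from Theorem~\ref{thm:maj}. The only point requiring modest care is confirming that the particular composition defining $\varphi_\mu$ realizes exactly the chain of foldings $(1^n) \to (n-2b-a, 1^{2b+a}) \to (n-2b-a, 2, 1^{2b+a-2}) \to \cdots \to \mu$ required for iterated application of Theorems~\ref{thm:hooks} and~\ref{thm:tower}; once this indexing check is in place, the rest of the argument is purely formal.
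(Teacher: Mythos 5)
Your proposal is correct and follows essentially the same route as the paper, which derives Corollary~\ref{cor:tower} exactly by combining the $\iDes$-preservation of $\phi_k$ and $\phi_{(a,b)}$ (Propositions~\ref{prop:gamma-iDes} and~\ref{prop:beta-iDes}) with the iterated class-merging of Theorems~\ref{thm:hooks} and~\ref{thm:tower}, the constancy of $\inv_\mu$ and $\maj_\mu$ on $\mu$-classes, and the Schur expansion of standard classes from Theorem~\ref{thm:maj}. The bookkeeping on the folding chain $(1^n)\to(n-2b-a,1^{2b+a})\to\cdots\to\mu$ that you flag is indeed the only indexing check needed, and it works out as you describe.
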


\begin{figure}[ht]
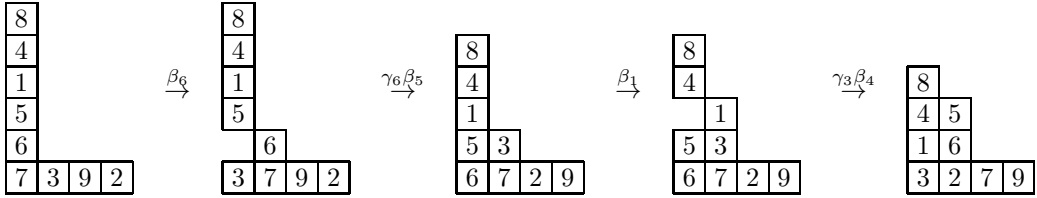

  \begin{displaymath}
    \tableau{ 8 \\ 4 \\ 1 \\ 5 \\ 6 \\ 7 & 3 & 9 & 2 }
    \hspace{\cellsize} \raisebox{-2\cellsize}{$\stackrel{\beta_6}{\rightarrow}$} \hspace{\cellsize}
    \tableau{ 8 \\ 4 \\ 1 \\ 5 \\ & 6 \\ 3 & 7 & 9 & 2 }
    \hspace{\cellsize} \raisebox{-2\cellsize}{$\stackrel{\gamma_6\beta_5}{\rightarrow}$} \hspace{\cellsize}
    \tableau{\\ 8 \\ 4 \\ 1 \\ 5 & 3 \\ 6 & 7 & 2 & 9 }
    \hspace{\cellsize} \raisebox{-2\cellsize}{$\stackrel{\beta_1}{\rightarrow}$} \hspace{\cellsize}
    \tableau{\\ 8 \\ 4 \\ & 1 \\ 5 & 3 \\ 6 & 7 & 2 & 9 }
    \hspace{\cellsize} \raisebox{-2\cellsize}{$\stackrel{\gamma_3\beta_4}{\rightarrow}$} \hspace{\cellsize}
    \tableau{\\ \\ 8 \\ 4 & 5 \\ 1 & 6 \\ 3 & 2 & 7 & 9 }
  \end{displaymath}
  \caption{\label{fig:fold-tower}Folding $(4,1^5)$ to $(4,2,2,1)$.}
\end{figure}

Continuing the example in Figure~\ref{fig:fold-hook}, see Figure~\ref{fig:fold-tower} for details of the map $\varphi_{(4,2,2,1)}$ on the permutation $841567392$. Again, the inverse descent set remains $\{2,3,7\}$ for each filling. The Macdonald weights for the left, middle and right partition shapes are $q^{4} t^{3}$, $q^{3} t^{3}$, and $q^{2} t^{5}$, respectively. Once again, the weights change with the bijection, but the set of permutations that determines the Macdonald polynomial is the same.

The inspiration for this paper comes from the generalized dual equivalence structures imposed on permutations for a given partition presented in \cite{Ass07,Ass15} and from the explicit transformations of these structures described in the unpublished preprint \cite{Ass-X}. The transformations, when applied to the dual equivalence structures for Macdonald polynomials, break generalized dual equivalence classes into smaller classes that are isomorphic to standard dual equivalence classes. The bijections in this paper are the direct translations of following these transformations in reverse. Therefore one can try to follow the algorithm for more general graphs in the hopes of resolving the following strengthening of Conjecture~\ref{conj:class}. This has been tested for partitions up to size $11$.

\begin{conjecture}
  Let $\mu = (\mu_1,\ldots,\mu_k,1^{m})$ and let $\nu = (\mu_1,\ldots,\mu_{k-1},\mu_k+1,1^{m-1})$, where $m>0$ and $\mu_k<\mu_{k-1}$. Then there exists a bijection $\varphi$ on permutations that preserves the inverse descent set such that if $u$ and $v$ are $\mu$-equivalent, then $\varphi(u)$ and $\varphi(v)$ are $\nu$-equivalent. In particular, for any partition $\mu$, we have
  \begin{equation}
    \widetilde{H}_{\mu}(X;q,t) = \sum_{\lambda} \left( \sum_{u \in \mathrm{SS}(\lambda)} q^{\inv_{\mu}(\varphi_{\mu}(u))} t^{\maj_{\mu}(\varphi_{\mu}(u))} \right) s_{\lambda}(X) ,
  \end{equation}
  where $\varphi_{\mu}$ is the (unique) sequence of bijections taking $(1^n)$ to $\mu$.
  \label{conj:bozo}
\end{conjecture}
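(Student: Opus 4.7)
The plan is to extend the bijective strategy of Sections~\ref{sec:foata}~and~\ref{sec:schur} to a single folding step $\mu = (\mu_1,\ldots,\mu_k,1^m) \to \nu = (\mu_1,\ldots,\mu_{k-1},\mu_k+1,1^{m-1})$, and then iterate along the folding sequence of Figure~\ref{fig:fold}. The bijection $\varphi$ should first percolate the tower $(\mu_k,1^m)$ down by one row through a generalization of the $\sigma_{(a,2b+2)}$ of Section~\ref{sec:schur}, and then attach the displaced leg cell to the end of row $k$ through a $\gamma$-style append generalizing $\phi_{a+2b+1}$. For $\mu_k = 1$ the construction must specialize to $\phi_m$, and for $\mu_k = 2$ to $\phi_{(m,0)}$.

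I would first generalize Assaf's bijection $\beta_x$ of \cite{Ass08} to a family $\beta^{(\ell)}_x$ that percolates $\ell$-tuples of letters through a row of length $\ell$. The recipe should use a nested straddling condition: an elementary $\ell$-cycle is applied to a block of cells only when the values in that block, together with those of the next block, satisfy a prescribed interleaving that extends the binary straddling predicate of $\beta_x$. The analog of Proposition~\ref{prop:beta-iDes} --- that $\beta^{(\ell)}_x$ preserves $\iDes$ --- should then follow because each elementary cycle is by construction an exchange of letters that are pairwise non-consecutive. Assembling $\beta^{(\mu_j)}_x$ for each row $j < k$ together with a $\gamma$-append on the bottom row, I obtain a candidate composite $\sigma^{(\mu,k)}$.

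The next step is the commutativity lemma generalizing Lemmas~\ref{lem:hooks}~and~\ref{lem:diagram}: whenever the consecutive triple $\{i-1,i,i+1\}$ does not straddle a newly-created row boundary of $\nu$, one should have $\sigma^{(\mu,k)} \circ D_i^\mu = D_i^\nu \circ \sigma^{(\mu,k)}$. This should be routine, since letters outside $\{i-1,i,i+1\}$ compare identically with all three values and hence commute with both $d_i$ and $\widetilde{d}_i$; the straddling condition determines which of the two involutions each side chooses. When the triple does straddle a boundary, $D_i^\mu$ and $D_i^\nu$ act by opposite choices of $d_i$ versus $\widetilde{d}_i$, and one must show that the resulting discrepancy is absorbed by the $\gamma$-correction, exactly as in the existing Theorem~\ref{thm:hooks}.

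The main obstacle is the generalization of Lemma~\ref{lem:inv} to rows of length $\mu_k \geq 3$. In the cases handled so far, the pattern to reconcile is always the short interchange $13ax2 \leftrightarrow 312xa$, where induction on the value $x$ combined with an ignore-large-letters argument suffices. When row $k$ holds arbitrarily many cells, the percolation must route an exchange through an extended sequence of positions in row $k$, and each intermediate swap can interact with previously processed cells; the induction must therefore simultaneously track both row length and the letter being relocated. Since Conjecture~\ref{conj:bozo} has been verified only computationally up to $n=11$, it is entirely plausible that the clean $\beta^{(\ell)}_x$ ansatz above requires additional $\gamma$-style corrections analogous to the cascade in $\phi_{(a,b)}$ before every exceptional pattern closes up. Identifying the precise form of these corrections, and verifying they compose associatively along the folding sequence, is where the bulk of the work would lie.
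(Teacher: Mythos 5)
The statement you are addressing is stated in the paper as Conjecture~\ref{conj:bozo}; the paper offers no proof of it, only the remark that it has been verified computationally for partitions up to size $11$. So there is no proof in the paper to compare against, and the real question is whether your outline closes the gap. It does not: you correctly identify that everything hinges on a generalization of Lemma~\ref{lem:inv} (and of the correction cascade in $\phi_{(a,b)}$) to rows of length $\mu_k \geq 3$, but you leave that step entirely open, so what you have is a program rather than a proof. Your claim that the commutativity lemma ``should be routine'' is also optimistic: already for $\mu_2 \le 2$ the non-commuting cases required the full strength of Theorem~\ref{thm:hooks} plus Lemma~\ref{lem:inv}, and those arguments lean heavily on there being at most two cells per row above the bottom.

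More seriously, there is concrete evidence in the source (in the author's trailing, uncompiled exploration of the case $(3,2,1) \to (3,3)$) that the naive one-cell-at-a-time folding strategy --- which is exactly what you propose --- breaks down at the first case beyond $\mu_2 \le 2$. Sliding the top of the leg rightward through intermediate diagrams works for the first steps, but at the last step both intermediate diagrams possess a generalized dual equivalence class with generating function $s_{(4,1,1)} + 2 s_{(3,2,1)} + s_{(2,2,2)}$, which is strictly larger than any dual equivalence class for the partition $(3,3)$. Hence no inverse-descent-preserving bijection built by interpolating through those diagrams can merge classes monotonically one step at a time: the classes for $(3,3)$ are indeed unions of classes for $(3,2,1)$, but they do not refine through the obvious intermediate shapes. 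Any successful proof must either find a different interpolation or handle the whole displaced row in a single stroke, and your nested-straddling ansatz $\beta^{(\ell)}_x$ would need to be tested against precisely this obstruction before the rest of the program can proceed.
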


%
\bibliographystyle{amsalpha} 
\bibliography{bozo.bib}

\providecommand{\bysame}{\leavevmode\hbox to3em{\hrulefill}\thinspace}
\providecommand{\MR}{\relax\ifhmode\unskip\space\fi MR }
\providecommand{\MRhref}[2]{%
  \href{http://www.ams.org/mathscinet-getitem?mr=#1}{#2}
}
\providecommand{\href}[2]{#2}
\begin{thebibliography}{HHL05}

\bibitem[Ass]{Ass-X}
Sami Assaf, \emph{Dual equivalence graphs and a combinatorial proof of {LLT}
  and {M}acdonald positivity}, arXiv:1005.3759.

\bibitem[Ass07]{Ass07}
Sami~Hayes Assaf, \emph{Dual equivalence graphs, ribbon tableaux and
  {M}acdonald polynomials}, ProQuest LLC, Ann Arbor, MI, 2007, Thesis
  (Ph.D.)--University of California, Berkeley.

\bibitem[Ass08]{Ass08}
Sami Assaf, \emph{A generalized major index statistic}, S\'em. Lothar. Combin.
  \textbf{60} (2008), Art. B50c, 13 pp. (electronic).

\bibitem[Ass15]{Ass15}
Sami~H. Assaf, \emph{Dual equivalence graphs {I}: a new paradigm for {S}chur
  positivity}, Forum Math. Sigma \textbf{3} (2015), e12, 33.

\bibitem[Bla16]{Bla16}
Jonah Blasiak, \emph{Haglund's conjecture on 3-column {M}acdonald polynomials},
  Math. Z. \textbf{283} (2016), no.~1-2, 601--628.

\bibitem[EG87]{EG87}
Paul Edelman and Curtis Greene, \emph{Balanced tableaux}, Adv. in Math.
  \textbf{63} (1987), no.~1, 42--99.

\bibitem[Fis95]{Fis95}
Susanna Fishel, \emph{Statistics for special {$q,t$}-{K}ostka polynomials},
  Proc. Amer. Math. Soc. \textbf{123} (1995), no.~10, 2961--2969.

\bibitem[Foa68]{Foa68}
Dominique Foata, \emph{On the {N}etto inversion number of a sequence}, Proc.
  Amer. Math. Soc. \textbf{19} (1968), 236--240.

\bibitem[Ges84]{Ges84}
Ira~M. Gessel, \emph{Multipartite {$P$}-partitions and inner products of skew
  {S}chur functions}, Combinatorics and algebra (Boulder, Colo., 1983),
  Contemp. Math., vol.~34, Amer. Math. Soc., Providence, RI, 1984,
  pp.~289--317.

\bibitem[GH93]{GH93}
Adriano~M. Garsia and Mark Haiman, \emph{A graded representation model for
  {M}acdonald's polynomials}, Proc. Nat. Acad. Sci. U.S.A. \textbf{90} (1993),
  no.~8, 3607--3610.

\bibitem[Hag04]{Hag04}
J.~Haglund, \emph{A combinatorial model for the {M}acdonald polynomials}, Proc.
  Natl. Acad. Sci. USA \textbf{101} (2004), no.~46, 16127--16131 (electronic).

\bibitem[Hai92]{Hai92}
Mark~D. Haiman, \emph{Dual equivalence with applications, including a
  conjecture of {P}roctor}, Discrete Math. \textbf{99} (1992), no.~1-3,
  79--113.

\bibitem[Hai01]{Hai01}
Mark Haiman, \emph{Hilbert schemes, polygraphs and the {M}acdonald positivity
  conjecture}, J. Amer. Math. Soc. \textbf{14} (2001), no.~4, 941--1006
  (electronic).

\bibitem[HHL05]{HHL05}
J.~Haglund, M.~Haiman, and N.~Loehr, \emph{A combinatorial formula for
  {M}acdonald polynomials}, J. Amer. Math. Soc. \textbf{18} (2005), no.~3,
  735--761 (electronic).

\bibitem[LLT97]{LLT97}
Alain Lascoux, Bernard Leclerc, and Jean-Yves Thibon, \emph{Ribbon tableaux,
  {H}all-{L}ittlewood functions, quantum affine algebras, and unipotent
  varieties}, J. Math. Phys. \textbf{38} (1997), no.~2, 1041--1068.

\bibitem[LM03]{LM03}
L.~Lapointe and J.~Morse, \emph{Tableaux statistics for two part {M}acdonald
  polynomials}, Algebraic combinatorics and quantum groups, World Sci. Publ.,
  River Edge, NJ, 2003, pp.~61--84.

\bibitem[Loe17]{Loe17}
Nicholas~A. Loehr, \emph{Variants of the {RSK} algorithm adapted to
  combinatorial {M}acdonald polynomials}, J. Combin. Theory Ser. A \textbf{146}
  (2017), 129--164.

\bibitem[Mac88]{Mac88}
I.~G. Macdonald, \emph{A new class of symmetric functions}, Actes du 20e
  Seminaire Lotharingien \textbf{372} (1988), 131--171.

\bibitem[Rob14]{Rob14}
Austin Roberts, \emph{Dual equivalence graphs revisited and the explicit
  {S}chur expansion of a family of {LLT} polynomials}, J. Algebraic Combin.
  \textbf{39} (2014), no.~2, 389--428.

\bibitem[Zab99]{Zab99}
Mike Zabrocki, \emph{Positivity for special cases of {$(q,t)$}-{K}ostka
  coefficients and standard tableaux statistics}, Electron. J. Combin.
  \textbf{6} (1999), Research Paper 41, 36 pp. (electronic).

\end{thebibliography}
%

\end{document}